\renewcommand\footnotemark{}
\begin{document}

\def\s#1#2{\langle \,#1 , #2 \,\rangle}

\def\H{{\mathbb H}}
\def\F{{\mathbb F}}
\def\C{{\mathbb C}}
\def\R{{\mathbb R}}
\def\Z{{\mathbb Z}}
\def\Q{{\mathbb Q}}
\def\N{{\mathbb N}}
\def\st{{\mathbb S}}
\def\D{{\mathbb D}}
\def\B{{\mathbb B}}
\def\G{{\Gamma}}
\def\GH{{\G \backslash \H}}
\def\g{{\gamma}}
\def\L{{\Lambda}}
\def\ee{{\varepsilon}}
\def\K{{\mathcal K}}
\def\Re{\text{\rm Re}}
\def\Im{\text{\rm Im}}
\def\SL{\text{\rm SL}}
\def\GL{\text{\rm GL}}

\def\slz{\text{\rm SL}(2,\Z)}
\def\slr{\text{\rm SL}(2,\R)}

\def\sgn{\text{\rm sgn}}
\def\tr{\text{\rm tr}}
\def\ca{{\mathfrak a}}
\def\cb{{\mathfrak b}}
\def\cc{{\mathfrak c}}
\def\cd{{\mathfrak d}}
\def\ci{{\infty}}

\def\sa{{\sigma_\mathfrak a}}
\def\sb{{\sigma_\mathfrak b}}
\def\sc{{\sigma_\mathfrak c}}
\def\sd{{\sigma_\mathfrak d}}
\def\si{{\sigma_\infty}}

\def\se{{\sigma_\eta}}
\def\sz{{\sigma_{z_0}}}

\def\sai{{\sigma^{-1}_\mathfrak a}}
\def\sbi{{\sigma^{-1}_\mathfrak b}}
\def\sci{{\sigma^{-1}_\mathfrak c}}
\def\sdi{{\sigma^{-1}_\mathfrak d}}
\def\sii{{\sigma^{-1}_\infty}}
\def\PSL{\text{\rm PSL}}
\def\vol{\text{\rm vol}}
\def\I{\text{\rm Im}}
\def \Im { {\mathrm{Im}} }
\def \Re { {\mathrm{Re}} }
\def \P {{\mathbb P}}
\def \re { {\mathrm{Re}} }

\def\e{\mathbf{e}}
\def\er{\eqref}
\def\lqs{\leqslant}
\def\gqs{\geqslant}

\newcommand{\m}[4]{\begin{pmatrix}#1&#2\\#3&#4\end{pmatrix}}
\newcommand{\n}{\frac1{\sqrt{37}}}
\newcommand{\ms}[4]{\left(\smallmatrix #1&#2\\#3&#4\endsmallmatrix\right)}
\newcommand{\ns}{\textstyle\frac1{\sqrt{37}}}

\newtheorem{theorem}{Theorem}[section]
\newtheorem{lemma}[theorem]{Lemma}
\newtheorem{prop}[theorem]{Proposition}
\newtheorem{cor}[theorem]{Corollary}
\newtheorem{conj}[theorem]{Conjecture}
\newtheorem{defn}[theorem]{Definition}
\newtheorem{remark}[theorem]{Remark}
\newtheorem{example}[theorem]{Example}

\renewcommand{\labelenumi}{(\roman{enumi})}

\numberwithin{equation}{section}
\newtheorem{main-theorem}{Theorem}
\newtheorem{main-prop}[main-theorem]{Proposition}

\bibliographystyle{alpha}


\title{Noncommutative modular symbols\\
and Eisenstein series}

\author{
  Gautam ~Chinta,
  Ivan ~Horozov,
  Cormac ~O'Sullivan\footnote{{\it Date:} Aug 27, 2018.
  \newline \indent \ \ \
  {\it 2010 Mathematics Subject Classification:} 11F11, 11F67, 30F35.
  \newline \indent \ \ \
  {\it Key words and phrases:} modular symbols, iterated integrals, Eisenstein series.}
  }







\date{}

\maketitle

\begin{abstract}
We form real-analytic Eisenstein series twisted by Manin's noncommutative modular symbols. After developing their basic properties, these series are shown to have meromorphic continuations to the entire complex plane and satisfy functional equations in some cases. This theory neatly contains and generalizes earlier work in the literature on the properties of Eisenstein series twisted by classical modular symbols.
\end{abstract}




\section{Introduction}
\subsection{Background}
Let $\Gamma$ be a congruence subgroup acting on the complex upper
half plane $\H$ together with its cusps: $\overline{\H}=\H \cup
\P^1(\Q)$.  The quotient is the modular curve
$X=\Gamma\backslash\overline{\H}.$  Manin  \cite{M1} introduced
{\em modular symbols}, which are elements of $H_1(X,\Q).$ There is a
natural pairing between modular symbols and holomorphic weight two
cusp forms on $\Gamma$, obtained by integration.  This pairing,
combined with the action of Hecke operators, yields rich arithmetic
information about periods of elliptic curves, special values of
$L$-functions and Fourier coefficients of modular forms.

In   \cite{Go} Goldfeld began a program of studying the distribution of modular symbols.
For $\g \in \G$ we use the notation
\begin{equation}\label{mod}
    \s{\g}{f} := 2\pi i\int_{z_0}^{\g z_0} f(u) \, du
\end{equation}
for the pairing between  the modular symbol $\{z_0,\g z_0\}$ and the weight two cusp form $f$ for $\G$.
It may be seen that \er{mod} is independent of $z_0 \in \overline{\H}$, and since $f$ is a cusp form, $\langle \gamma,f\rangle$
depends only on the bottom row of the matrix $\gamma$.
Following earlier  work of  O'Sullivan \cite{OS} and
Petridis \cite{Pe}, it is shown in Petridis-Risager \cite[Thms. A,G]{PeRi} that as $T\to\infty$ the values in
\begin{equation*}
  \left\{
\frac{\Re \s{\ms{*}{*}{c}{d}}{f}}
{\sqrt{\log(c^2+d^2)}}
\ : \ c^2+d^2 \lqs T
\right\}
\end{equation*}
follow a normal distribution, and  that
\begin{equation*}
  \sum_{c^2+d^2 \lqs T} \left| \s{\ms{*}{*}{c}{d}}{f}\right|^2 =\frac{16 \pi^2}{\vol(\GH)} || f||^2 T \log T +O(T) \quad \text{as} \quad T\to \infty.
\end{equation*}
Very recently, Petridis and Risager in \cite{PR18} have also verified, exactly and on average, some of the conjectures of Mazur, Rubin and Stein
on the statistics of modular symbols.

These results depend on  properties
of {\em Eisenstein series twisted by modular
  symbols}, that is, functions of the form
\begin{equation}
  \label{def:ems}
  E(z,s;f)=\sum_{\gamma \in \Gamma_\infty\backslash\Gamma}
\langle\gamma,f\rangle\Im(\gamma z)^{s}
\end{equation}
and their generalizations, as in
\cite{Go}.  Here $z$ is in the upper half plane and $s$ is a complex
number, initially with real part bigger than $1$ to ensure
convergence.  The function $E(z,s;f)$ is not automorphic in $z$, but
it does satisfy the more complicated relation
\begin{equation}
  \label{eq:1}
  E(\gamma_1\gamma_2z,s;f)- E(\gamma_1z,s;f)- E(\gamma_2z,s;f)+ E(z,s;f)=0
\end{equation}
for all $\gamma_1,\gamma_2\in \Gamma.$

Define
 $ (f|_k \g)(z)$ as $j(\g,z)^{-k}f(\g z)$ for
 $j( \left(\smallmatrix a & b \\ c & d
  \endsmallmatrix\right),z):=cz+d$ and extend this action to the group
ring $\C[\G]$ by linearity.  The relation (\ref{eq:1}) led
Chinta-Diamantis-O'Sullivan \cite{cdos} to define a \emph{second-order
  modular form} on $\Gamma$ of weight $k$ as a holomorphic function on
the upper half plane which satisfies
\begin{equation}
  \label{def:somf}
  f|_k(\gamma_1-I)(\gamma_2-I)=0, \mbox{ \ for all \ }
  \gamma_1,\gamma_2\in \Gamma.
\end{equation}
 An \emph{$n$th-order modular
  form} of  weight $k$ on $\Gamma$ satisfies
 \begin{equation}
  \label{def:homf}
  f|_k(\gamma_1-I)(\gamma_2-I) \cdots (\gamma_n-I)=0,
\mbox{ \ for all \ }
  \gamma_i\in \Gamma.
\end{equation}
Higher-order modular forms were also independently defined by Kleban
and Zagier \cite{kz}.  Their motivation comes from the study of
modular properties in crossing probabilities on 2 dimensional
lattices.
One can similarly define higher-order (real-analytic) Eisenstein
series generalizing the second-order series in (\ref{def:ems}) as in \cite{PeRi} and \cite{JOS08}.  Very
general series of this form are studied by Diamantis and Sim
\cite{DS}.

Manin \cite{M2} has initiated the development of a theory of {\em
  noncommutative modular symbols}.  These arise as iterated integrals
of cusp forms and Eisenstein series. The principal motivation for
their study comes from {\em multiple zeta values}.  See, for example,
the paper of Choie and Ihara \cite{ChoieIhara} which gives explicit
formulas for iterated integrals in terms of multiple Hecke
$L$-functions.  Generalizing in another direction, Horozov \cite{Ho15} defines
{\em noncommutative Hilbert modular symbols}.


In the present paper we construct  Eisenstein series twisted by Manin's
noncommutative modular symbols. The result is a generating series
whose coefficients contain the higher-order
Eisenstein series studied in \cite{PeRi,JOS08,DS} as well as further new series not studied before. We find that the automorphic
properties and functional equations of the earlier higher-order Eisenstein series are elegantly and
succinctly encapsulated in our new formalism.

\subsection{Main results}\label{sec:mainresults}

We introduce some basic notation in order to state our main results;
see e.g. Iwaniec \cite{IwSp} for a fuller background.  The group
$\SL_2(\R)$ acts by linear fractional transformations on
$\H \cup \P^1(\R)$.  Let $\G \subset \SL_2(\R)$ be a Fuchsian group of
the first kind, i.e. a discrete subgroup of $\SL_2(\R)$ with quotient
$\G\backslash\H$ of finite hyperbolic volume. We may fix a finite set
of inequivalent cusps for $\G$, which we assume to be nonempty.  Let
$\G_\ca$ be the subgroup of $\G$ fixing the cusp $\ca$. Then
$\overline{\G}_\ca$ is isomorphic to $\Z$, where the bar means the
image under the map $\SL_2(\R) \to \SL_2(\R)/\pm I$. (We don't assume
$-I \in \G$.) This isomorphism can be seen explicitly as there exists
a scaling matrix $\sa \in \SL_2(\R)$ such that $\sa \infty = \ca$ and
$$
 \sai\overline{\G}_\ca \sa  = \left\{\left. \pm \ms{1}{\ell}{0}{1} \
   \right|\  \ell\in \Z\right\}.
 $$

 A  function $f$ on $\H$ has weight $k$ for $\G$ if $f$ satisfies
 $ (f|_k \g)(z) = f(z)$ for all $\g \in \G$. If $f$ is also holomorphic on $\H$
and has a Fourier expansion at each cusp $\ca$ of the form
\begin{equation}\label{expnpar}
\left(f|_k  \sa\right) (z)=\sum_{m =0}^\infty c_\ca(m;f) e^{2\pi i m z}
\end{equation}
then $f$ is a modular form  for $\G$.
If $c_\ca(0;f)=0$ for every cusp $\ca$ then $f$ is called a cusp form.

For $a$ and $b$  in $\H\cup\{\mathtt{cusps}\}$, set
\begin{equation} \label{cab}
  C_a^b(f_1, f_2, \dots,  f_n)
  :=  \int_a^b f_{n}(z_n) \int_a^{z_n} f_{n-1}(z_{n-1}) \cdots \int_a^{z_2} f_1(z_1) \, dz_1 \cdots dz_n
\end{equation}
where $f_1, f_2, \dots, f_n$  are  weight $2$ holomorphic cusp forms for $\G$. Near end-points
that are cusps, we assume the path of integration follows a geodesic
to ensure we remain in the region where the cusp forms have exponential
decay. In this way \er{cab} is always absolutely convergent.

Let $\mathbf{f}=(f_1,f_2, \dots, f_r)$ be an $r$-tuple of not necessarily distinct, weight $2$
holomorphic cusp forms for $\G$.  To each $f_i$ we associate the
variable $X_i$ and define
 \begin{equation} \label{iab}
   I_a^b=I_a^b(\mathbf{f}) :=
   1 + \sum_i  C_a^b(f_i) \cdot X_i  +
   \sum_{i,j} C_a^b(f_i, f_j) \cdot X_i X_j  + \sum_{i,j,k} C_a^b(f_i, f_j, f_k) \cdot X_i X_j X_k +\cdots.
\end{equation}
Thus $I_a^b$ is an element of the ring
$\C\llangle X_1,\ldots, X_r\rrangle$ of formal power series in the
non-commuting variables $X_i$ with coefficients in $\C$.  We will also use the notation $I_a^b(f_1, f_2, \dots,  f_r)$ for \er{iab}. In the case when $r=0$ it is convenient to set
\begin{equation}\label{icd}
  I_a^b():=1, \qquad C_a^b():=1.
\end{equation}

As we will see, these symbols satisfy the two key relations of {\em concatenation} and {\em $\G$-invariance}:
\begin{align}\label{msc}
 I_a^b I_b^c & = I_a^c,\\
 I_{\g a}^{\g b} & = I_a^b  \label{msg}
\end{align}
for all $a, b, c \in \H\cup\{\mathtt{cusps}\}$ and all $\g \in \G$. Manin's {\em noncommutative modular symbol} is $I_{\g a}^a$ for $\g \in \G$ and base point $a \in \H\cup\{\mathtt{cusps}\}$.
As described in \cite[Prop. 2.5.1]{M2}, $I_{\g a}^a$ is a representative of a nonabelian cohomology class in $H^1(\G,\Pi)$ where $\Pi$ is the group of invertible elements of $\C\llangle X_1,\ldots, X_n\rrangle$ with constant term $1$.

For each cusp $\cb$ we define the {\em Eisenstein series with noncommutative twists} as
\begin{equation} \label{def:etbncms} \mathcal{E}_{\cb}(z,s):=\sum_{\g
    \in \G_\cb\backslash \G} I_{\g a}^{a} \cdot \Im(\sbi\g z)^s.
\end{equation}
Our first result proves the basic properties of this  series. Recall that the hyperbolic Laplacian $\Delta$ is given by $y^2(d^2/dx^2+d^2/dy^2)$ for $z=x+iy$.

\begin{theorem}\label{thm:intro}
  The series $\mathcal{E}_{\cb}(z,s)$ is well-defined and absolutely convergent for
  $\re(s)>1$. It is an eigenfunction of the Laplacian:
  \begin{equation}\label{lap}
    \Delta \mathcal{E}_{\cb}(z,s) = s(s-1) \mathcal{E}_{\cb}(z,s)
  \end{equation}
   and for all $\delta \in \G$ satisfies
   \begin{equation}\label{ginv}
     \mathcal{E}_\cb(\delta z,s) = I_{a}^{\delta a} \cdot
  \mathcal{E}_\cb(z,s).
   \end{equation}
\end{theorem}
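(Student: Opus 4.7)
The plan is to address the three claims in turn: absolute convergence coefficient-by-coefficient in $\C\llangle X_1,\ldots,X_r\rrangle$, then the Laplace eigenequation (straightforward once convergence is in hand), and finally the transformation law \er{ginv} (where the algebraic content lies, using the concatenation relation \er{msc} and the $\G$-invariance relation \er{msg} stated just before the theorem).

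For convergence I would read off the coefficient of a monomial $X_{i_1}\cdots X_{i_n}$ in $\mathcal{E}_\cb(z,s)$, which is
$$\sum_{\g\in\G_\cb\backslash\G} C_{\g a}^a(f_{i_1},\ldots,f_{i_n})\cdot \Im(\sbi\g z)^s,$$
and show this converges absolutely for $\re(s)>1$. For weight-$2$ holomorphic cusp forms $y|f(z)|$ is $\G$-invariant (since $|f(\g z)|=|cz+d|^2|f(z)|$ cancels $\Im(\g z)=y/|cz+d|^2$) and bounded on $\H$ by continuity together with cuspidal decay at the cusps, so $|f(z)|\ll 1/y$. Integrating along the geodesic from $\g a$ to $a$ (whose hyperbolic arc-length element is $|dz|/y$) and using the usual simplex bound for iterated integrals gives $|C_{\g a}^a(f_{i_1},\ldots,f_{i_n})|\ll d(a,\g a)^n/n!$. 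Since $d(a,\g a)$ grows at most logarithmically in the matrix entries of $\g$, this polylogarithmic factor is absorbed upon comparison with the ordinary Eisenstein series attached to $\cb$, which converges absolutely for $\re(s)>1$.

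The Laplace eigenequation is then immediate: $I_{\g a}^a$ is $z$-independent (with $a$ fixed), and since $\Delta$ is $\SL_2(\R)$-invariant and $\Delta y^s=s(s-1)y^s$, one has $\Delta_z\Im(\sbi\g z)^s=s(s-1)\Im(\sbi\g z)^s$ for each $\g$. Interchanging $\Delta$ with the absolutely convergent sum gives \er{lap}.

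For the transformation property I would substitute $\g\to\g\delta$ in the defining sum (which permutes the cosets $\G_\cb\backslash\G$), obtaining
$$\mathcal{E}_\cb(\delta z,s)=\sum_{\g\in\G_\cb\backslash\G} I_{\g\delta^{-1}a}^a\cdot \Im(\sbi\g z)^s.$$
Concatenation \er{msc} factors $I_{\g\delta^{-1}a}^a=I_{\g\delta^{-1}a}^{\g a}\cdot I_{\g a}^a$, and two applications of $\G$-invariance \er{msg} (first with $\g^{-1}$, then with $\delta$) rewrite $I_{\g\delta^{-1}a}^{\g a}=I_{\delta^{-1}a}^a=I_a^{\delta a}$. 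Since this factor is independent of $\g$, it pulls out of the sum to produce \er{ginv}. The main obstacle is the uniform bound on iterated integrals required for convergence; the rest is formal once \er{msc} and \er{msg} are available.
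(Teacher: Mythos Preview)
Your argument for the transformation law \er{ginv} and the Laplace eigenequation \er{lap} matches the paper's exactly (Proposition \ref{prop:tp} and the remark at the end of Section \ref{32}).

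There is, however, a genuine gap: you do not address \emph{well-definedness}, i.e.\ that each summand in \er{def:etbncms} depends only on the coset $\G_\cb\g$ and not on the choice of representative. For $\tau\in\G_\cb$ one needs $I_{\tau\g a}^a = I_{\g a}^a$. By concatenation and $\G$-invariance this reduces to $I_{\tau' a}^a = 1$ for $\tau'$ parabolic, which is \emph{not} a formal consequence of \er{msc} and \er{msg}: it is Proposition \ref{cab0} (and Corollary \ref{cr}), proved using the Fourier expansion and the vanishing of the constant term of each cusp form. Your closing sentence ``the rest is formal once \er{msc} and \er{msg} are available'' is therefore not quite right.

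On convergence your strategy is close to the paper's, but the execution differs. The bound $|C_{\g a}^a(\ldots)|\ll d(a,\g a)^n/n!$ from the simplex estimate is fine when $a\in\H$, but when $a$ is a cusp (which is allowed) $d(a,\g a)=\infty$ and the bound says nothing; you would need to invoke a base-point change such as \er{bpac} first. Moreover, $d(a,\g a)$ is not coset-invariant and is not directly expressed in terms of $\Im(\sbi\g z)$, so the step ``absorbed upon comparison with the ordinary Eisenstein series'' needs more work. The paper handles this more cleanly: Corollary \ref{hjk} gives
\[
C_{\g a}^a(f_1,\dots,f_n)\ll 1+|\log\Im(\sbi z)|^n+|\log\Im(\sbi\g z)|^n,
\]
a bound that is manifestly coset-invariant and phrased directly in the quantity appearing in $E_\cb(z,s)$. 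Then the elementary inequality $|\log y|^k<(2/\varepsilon)^k(y^{\varepsilon k}+y^{-\varepsilon k})$ makes the absolute series dominated by $E_\cb(z,\sigma)+E_\cb(z,\sigma\pm\varepsilon(m+n))$ (Theorem \ref{thbn}).
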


In the above statement, the absolute convergence of
$\mathcal{E}_{\cb}(z,s)$ means that the coefficient of each
$X_{i_1}\cdots X_{i_n}$ in the formal power series defining
$\mathcal{E}_{\cb}(z,s)$ is an absolutely convergent sum over
$\gamma \in \G_\cb\backslash \G.$ The meromorphic continuation of
$\mathcal{E}_{\cb}(z,s)$ described below has the same meaning.

Note that changing the base point in \er{def:etbncms} has a simple
effect: if $\mathcal{E}_{\cb}'(z,s)$ is defined as in
\er{def:etbncms}, but with base point $c$ instead of $a$, then
$\mathcal{E}_{\cb}'(z,s) = I_{c}^{a} \cdot \mathcal{E}_{\cb}(z,s)
\cdot I_{a}^{c}$ since
\begin{equation}\label{bpac}
  I_{\g c}^{c} = I_{\g c}^{\g a} \cdot I_{\g a}^{a}  \cdot I_{a}^{c}  = I_{c}^{a} \cdot I_{\g a}^{a}  \cdot I_{a}^{c}
\end{equation}
by \er{msc} and \er{msg}.

\begin{theorem}\label{thm:con}
  The series $\mathcal{E}_{\cb}(z,s)$ admits a meromorphic continuation to all $s \in \C$.
\end{theorem}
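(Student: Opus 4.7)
The plan is to trivialize the twisted transformation law by passing to a genuinely automorphic formal power series and then invert the resulting triangular system of inhomogeneous Laplace equations coefficient by coefficient. Set
\[
\mathcal{F}_\cb(z,s) := (I_a^z)^{-1}\cdot\mathcal{E}_\cb(z,s),
\]
which is well defined since $I_a^z$ has constant term $1$. A direct computation combining \er{ginv}, the concatenation identity $I_a^{\delta a}=I_a^{\delta z}\cdot I_{\delta z}^{\delta a}$, and the $\G$-invariance $I_{\delta z}^{\delta a}=I_z^a=(I_a^z)^{-1}$ gives $\mathcal{F}_\cb(\delta z,s)=\mathcal{F}_\cb(z,s)$ for every $\delta\in\G$, so each coefficient of $\mathcal{F}_\cb$ in the non-commuting variables $X_i$ is a genuinely $\G$-automorphic function on $\H$.

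Next I would translate the eigenvalue equation \er{lap} through the substitution $\mathcal{E}_\cb=I_a^z\cdot\mathcal{F}_\cb$. Each coefficient of $I_a^z$ is holomorphic in $z$ (hence annihilated by $\Delta$) and satisfies $\partial_z I_a^z=I_a^z\cdot\sum_i f_i(z)X_i$, so the Leibniz rule produces the triangular system
\[
(\Delta-s(s-1))\,\mathcal{F}_\cb^{(i_1,\ldots,i_n)}(z,s)=-4y^2\,f_{i_1}(z)\,\partial_{\bar z}\mathcal{F}_\cb^{(i_2,\ldots,i_n)}(z,s),
\]
where $\mathcal{F}_\cb^{(i_1,\ldots,i_n)}$ denotes the coefficient of the monomial $X_{i_1}\cdots X_{i_n}$ in $\mathcal{F}_\cb$. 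The right-hand side depends only on the strictly lower-order coefficient $\mathcal{F}_\cb^{(i_2,\ldots,i_n)}$, which sets up an induction on $n$.

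The base case $n=0$ gives $\mathcal{F}_\cb^{()}(z,s)=E_\cb(z,s)$, the classical Eisenstein series, whose meromorphic continuation to $\C$ is a theorem of Selberg and Roelcke. For the inductive step, observe that $y^2\partial_{\bar z}$ sends weight-$0$ automorphic functions to weight-$(-2)$ forms, so the source $-4y^2 f_{i_1}\partial_{\bar z}\mathcal{F}_\cb^{(i_2,\ldots,i_n)}$ is genuinely $\G$-automorphic; moreover it decays exponentially at every cusp because $f_{i_1}$ does and $\mathcal{F}_\cb^{(i_2,\ldots,i_n)}$ has at worst polynomial growth there, and it is meromorphic in $s$ by the inductive hypothesis. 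Applying the meromorphically continued resolvent $(\Delta-s(s-1))^{-1}$ on $L^2(\G\backslash\H)$---continued through the continuous spectrum in the Faddeev--Selberg manner---produces a meromorphic particular solution, and one adds a meromorphic multiple of the standard Eisenstein series (and residues in the discrete spectrum) to match the cuspidal Fourier expansion of $\mathcal{F}_\cb^{(i_1,\ldots,i_n)}$ forced by the absolutely convergent defining series in $\re(s)>1$. Multiplying back by the $s$-independent series $I_a^z$ transfers the continuation to every coefficient of $\mathcal{E}_\cb$.

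The main technical obstacle is the meromorphic continuation of the resolvent through the continuous spectrum, together with the correct identification of the particular solution whose cuspidal asymptotics reproduce those prescribed by the defining series. These issues are well understood for individual higher-order Eisenstein series in \cite{JOS08,DS}; the virtue of the noncommutative formalism is that the trivialization $\mathcal{F}_\cb=(I_a^z)^{-1}\mathcal{E}_\cb$ reduces the full infinite hierarchy of higher-order Eisenstein series to a single triangular recursion driven by the cusp-form source $-4y^2 F(z)\partial_{\bar z}$ with $F(z)=\sum_i f_i(z)X_i$.
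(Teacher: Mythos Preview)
Your approach is valid and genuinely different from the paper's. The paper follows Selberg's method via the Fredholm theory of integral equations, working directly with the non-automorphic twisted series $E_\ca(z,s;\mathbf{f},\overline{\mathbf{g}})$: one writes the resolvent as an integral operator with Green kernel $G_{\alpha\beta}$, uses the transformation law \er{eb2} to fold the integral over $\H$ back to $\F$, and obtains a Fredholm integral equation whose inhomogeneous term $q^{m,n}$ involves only strictly lower-order twisted series. The Fredholm resolvent kernel $D_\lambda(z,z')/D(\lambda)$ then furnishes the continuation, by induction on $m+n$. Your route instead trivializes the twist by passing to $\mathcal{F}_\cb=(I_a^z)^{-1}\mathcal{E}_\cb$, which is precisely the formal-series packaging of the automorphic functions $Q_\cb$ in \er{qeis2}; the paper in fact remarks after Theorem~\ref{cont2} that the Diamantis--Sim spectral method applied to $Q_\cb$ should also yield the continuation, and your recursion $(\Delta-s(s-1))\mathcal{F}_\cb^{(i_1,\dots,i_n)}=-4y^2 f_{i_1}\partial_{\bar z}\mathcal{F}_\cb^{(i_2,\dots,i_n)}$ is exactly the engine of that alternative.

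What each buys: the Fredholm approach yields the uniform quantitative package of Theorem~\ref{cont2}, in particular the single analytic function $A_\ca(s)$ whose $(m{+}n{+}1)$st power kills all poles of the order-$(m{+}n)$ series, together with explicit growth bounds on Fourier coefficients. Your resolvent approach is conceptually cleaner and explains transparently why the hierarchy is triangular, but the pole structure and growth control have to be extracted separately from the resolvent and from the Eisenstein correction term. Your sketch is honest about the genuine work remaining---continuing the resolvent through the continuous spectrum and pinning down the Eisenstein multiples by matching the $y^s$ asymptotics of $\mathcal{F}_\cb^{(i_1,\dots,i_n)}$ at each cusp---and these are indeed handled in the references you cite. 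One small point to make precise: even for $n\geq 1$ the function $\mathcal{F}_\cb^{(i_1,\dots,i_n)}$ may carry a nonzero $y^s$ term at the cusp $\cb$ (coming from $C_\cb^a(f_{i_1},\dots,f_{i_n})$ via \er{qtew}), so the Eisenstein correction is generally nontrivial and its coefficient, though $s$-independent, must be identified before the resolvent can be applied to an $L^2$ remainder.
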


Theorem \ref{thm:con} is shown in Section \ref{contf}. In fact much more is proved there, including bounds on the continued series $\mathcal{E}_{\cb}(z,s)$ as well as its Fourier coefficients.

When $r=1$, so there is just one cusp form $f_1=f$ and $I_a^b$ is in
the commutative power series ring $C\llbracket X\rrbracket$, the
function $\mathcal{E}_{\cb}(z,s)$ makes sense not just as a formal
power series but also as a convergent series in the real parameter
$X$.  In fact, this is the approach taken by Petridis \cite{Pe} and
Petridis-Risager \cite{PeRi} who use spectral deformation theory to
study generalizations of the series (\ref{def:ems}).  Thus our
work may be considered an extension of their ideas in using a
generating series to simultaneously study the properties of a
collection of modular symbols or iterated integrals.  See also our
remarks following Theorems \ref{cont2} and \ref{thm:fe}.



\subsection*{Acknowledgments}

We thank the organizers of {\em Building Bridges: $3^{rd}$ EU/US
  Summer School and Workshop on Automorphic Forms and Related Topics}
for encouraging us to prepare and submit this work. We also thank Nikolaos Diamantis and Yuri Manin for their comments on an earlier draft. All three authors were partially supported by PSC-CUNY Awards and the first named
author  acknowledges support  from NSF-DMS
1601289.

\section{Properties of iterated integrals}\label{sec:iterated_integrals}
\subsection{Basic properties}
We collect in this section properties of iterated integrals which we
will need in our study of noncommutative modular symbols and the
associated Eisenstein series.  Note that  the
definition (\ref{cab}) makes sense for general holomorphic functions
$f_1, \ldots, f_n$ on $\C$ and endpoints  $a,$ $b\in\C$. So, just in this subsection, we include these more general functions $f_i$.

\begin{prop}\label{prop:ii_properties1}  Let
  $a,b,c\in\C$ and let $f_1, \ldots ,f_n$ be
  holomorphic functions on $\C.$ Then we have
  \begin{enumerate}
  \item 
     $\displaystyle C_a^{b}(\overbrace{f_1,f_1, \dots,  f_1}^n)=
      \frac{1}{n!}\biggl( \int^{b}_a f_1(u) \, du \biggr)^n$,
    \item  $C_a^{b}(f_1, f_2, \dots,  f_n)= (-1)^n  C_b^{a}(f_n,
      f_{n-1}, \dots,  f_1)$,
    \item 
 $ C_a^c(f_1, f_2, \dots,  f_n) = \sum_{0 \lqs i \lqs n} C_a^{b}(f_1,  \dots,  f_i) \cdot C_b^{c}(f_{i+1},  \dots,  f_n).$
  \end{enumerate}
 \end{prop}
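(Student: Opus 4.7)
The plan is to prove the three parts by induction on $n$ directly from the definition (\ref{cab}); all three are classical properties of iterated integrals. Since the $f_i$ are holomorphic on $\C$, Cauchy's theorem lets us deform paths freely, so we may work with any convenient path for each identity.

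For (i), I would induct on $n$, with base case $n=1$ immediate. For the inductive step, I would unfold the outermost integral as
\[
C_a^b(\overbrace{f_1,\dots,f_1}^{n}) = \int_a^b f_1(z_n)\, C_a^{z_n}(\overbrace{f_1,\dots,f_1}^{n-1})\, dz_n,
\]
apply the induction hypothesis to the inner iterated integral, and evaluate by the substitution $u=\int_a^{z_n} f_1(w)\,dw$, $du=f_1(z_n)\,dz_n$, which reduces the outer integral to that of $u^{n-1}$ and produces the factor $1/n!$.

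For (ii), I would parameterize a path from $a$ to $b$ by $\gamma\colon[0,1]\to\C$ with $\gamma(0)=a$ and $\gamma(1)=b$, set $F_k(t):=f_k(\gamma(t))\gamma'(t)$, and unfold both iterated integrals as integrals over the ordered simplex $\{0\le t_1\le\cdots\le t_n\le 1\}$. The reversed path $\tilde\gamma(s)=\gamma(1-s)$ satisfies $\tilde\gamma'(s)=-\gamma'(1-s)$, producing an overall factor of $(-1)^n$, and the substitution $s_k=1-t_{n+1-k}$ preserves the ordered simplex, relabels the variables so that $F_k(1-s_{n+1-k})$ becomes $F_k(t_k)$, and thereby transforms $C_b^a(f_n,\dots,f_1)$ into $(-1)^n C_a^b(f_1,\dots,f_n)$.

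For (iii), I would again induct on $n$, the case $n=0$ being the convention (\ref{icd}). For the inductive step, I would choose a path from $a$ to $c$ passing through $b$ and split the outermost integral as $\int_a^c=\int_a^b+\int_b^c$. The first piece yields $C_a^b(f_1,\dots,f_n)$, which is the $i=n$ term in the target sum (using $C_b^c()=1$). For the second piece, the induction hypothesis applied with upper limit $z_n\in[b,c]$ expands $C_a^{z_n}(f_1,\dots,f_{n-1}) = \sum_{i=0}^{n-1} C_a^b(f_1,\dots,f_i)\,C_b^{z_n}(f_{i+1},\dots,f_{n-1})$; swapping sum and integral, each of the resulting integrals $\int_b^c f_n(z_n)\,C_b^{z_n}(f_{i+1},\dots,f_{n-1})\,dz_n$ equals $C_b^c(f_{i+1},\dots,f_n)$, giving the remaining $0\le i\le n-1$ terms. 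None of these steps presents a genuine obstacle; the only place calling for care is the change of variables in (ii), where one must check that the substitution is measure-preserving on the ordered simplex and that the $n$ sign changes from $\tilde\gamma'$ combine to give the single factor $(-1)^n$.
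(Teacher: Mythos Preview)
Your proposal is correct in all three parts. Part (i) matches the paper's argument exactly.

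For parts (ii) and (iii) you take a different route from the paper. The paper argues both parts via an algebraic device: setting $F_1(z)=\int_{a_0}^z f_1$, one has the length-reducing identity
\[
C_a^b(f_1,f_2,\dots,f_n)=C_a^b(F_1\cdot f_2,f_3,\dots,f_n)-F_1(a)\,C_a^b(f_2,\dots,f_n),
\]
and the proofs proceed by applying this (and its analogue with $a_0=a$, where the correction term vanishes) to both sides and inducting on $n$. Your arguments are instead geometric: for (ii) you pass to the simplex model and use the reflection $s_k=1-t_{n+1-k}$, and for (iii) you split the outermost integral along a path through $b$ and apply the induction hypothesis to the inner block. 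Your approach is arguably more transparent and is the standard ``Chen'' proof of these identities; the paper's approach has the virtue of staying entirely within the iterated-integral formalism without parameterizing paths, which makes it mesh a bit more smoothly with the later manipulations in Section~\ref{sec:iterated_integrals}. Either way there is no gap.
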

\begin{proof}
  Part (i) follows by induction and the change of variables
  $w= \int^{z}_a f(u) \, du$.   Part (ii) is \cite[Lemma 1.1, (v)]{DH}.
We may also give an elementary proof by induction as follows. The cases $n=0,1$ are true, so assume $n \gqs 2$. Set
  \begin{equation}
    \label{eq:Fj}
  F_j(z):=\int_{a_0}^z f_j(w)\, dw
  \end{equation}
for some fixed $a_0$. Then
\begin{equation}\label{fjz}
  C_a^{b}(f_1, f_2, \dots,  f_n) = C_a^{b}(F_1\cdot f_2,f_3, \dots,  f_n) - F_1(a) C_a^{b}(f_2, \dots,  f_n).
\end{equation}
The induction hypothesis yields
\begin{equation} \label{fjz2}
  C_a^{b}(f_1, f_2, \dots,  f_n) = (-1)^{(n-1)}\left(C_b^{a}(f_n, \dots, f_2 \cdot F_1) - F_1(a) C_b^{a}(f_n, \dots,  f_2)\right).
\end{equation}
Applying \er{fjz} to each term on the right of \er{fjz2} and using
induction again shows
\begin{multline}\label{fzc}
  C_a^{b}(f_1, f_2, \dots,  f_n) = -C_a^{b}(F_1 \cdot f_2,f_3, \dots,  f_{n-2}, f_{n-1}\cdot F_n)
  + F_n(b) C_a^{b}(F_1 \cdot f_2,f_3, \dots,  f_n)\\
  + F_1(a) C_a^{b}(f_1, \dots,  f_{n-2}, f_{n-1}\cdot F_n)
  -F_1(a) F_n(b) C_a^{b}(f_2, \dots,  f_{n-1}).
\end{multline}
Expressing $C_b^{a}(f_n, f_{n-1}, \dots,  f_1)$ with formula \er{fzc}
and using the induction hypothesis completes the proof of (ii).

Part (iii) is \cite[Lemma 1.1, (iii)]{DH}. We may easily prove it by taking $a_0=a$ in the definition (\ref{eq:Fj}) of
$F_1$ to get
\begin{equation}\label{wer2}
  C_a^c(f_1, f_2, \dots,  f_n) = C_a^c(F_1 \cdot f_2,f_3, \dots,  f_n).
\end{equation}
Applying \er{wer2} to both sides of (iii) and
using induction completes the proof.
\end{proof}

Proposition \ref{prop:ii_properties1} (iii) implies the concatenation
relation \er{msc}.  With Proposition \ref{prop:ii_properties1} (i) we
have, at least formally,
\begin{equation*}
  I_a^b(f_1) = 1 +   C_a^b(f_1) \cdot X +  C_a^b(f_1, f_1) \cdot X^2  +  C_a^b(f_1, f_1,f_1) \cdot X^3  +\cdots = \exp\biggl(  \int^{b}_a f_1(u) \, du \cdot X \biggr).
\end{equation*}
We will see that the iterated integrals $C_a^{b}(f_1, f_2, \dots,  f_n)$ satisfy a further family of general identities in \er{shuffle}.

\subsection{Modular properties}
 The iterated integrals will have
certain modular invariance properties when $f_1, \dots, f_n$ are  weight $2$ cusp forms, as we assume for the rest of the paper.
The $\G$-invariance relation \er{msg} is equivalent to
\begin{equation*}
  C_{\g a}^{\g b}(f_1, f_2, \dots,  f_n)=C_a^b(f_1, f_2, \dots,  f_n)
\end{equation*}
which may be easily verified directly.

\begin{prop} \label{cab0}
For any $a\in \H$ and any parabolic $\g \in \G$ we have $C_a^{\g a}(f_1, f_2, \dots,  f_n)=0$ when $n\gqs 1$.
\end{prop}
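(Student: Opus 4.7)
The plan is to prove the stronger statement that the entire formal power series $I_a^{\g a}$ equals $1$; the proposition then falls out immediately by reading off the coefficient of $X_{i_1}X_{i_2}\cdots X_{i_n}$ in \er{iab} for any $n \gqs 1$.

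The mechanism is a telescoping through the unique cusp $\cc \in \P^1(\R)$ fixed by the parabolic element $\g$. First I would use the concatenation relation \er{msc}, inserting $\cc$ as an intermediate endpoint:
\begin{equation*}
  I_a^{\g a} = I_a^{\cc}\cdot I_{\cc}^{\g a}.
\end{equation*}
Next I would rewrite the lower endpoint of the second factor as $\cc = \g\cc$ and invoke the $\G$-invariance \er{msg} to obtain
\begin{equation*}
  I_{\cc}^{\g a} = I_{\g\cc}^{\g a} = I_{\cc}^{a}.
\end{equation*}
Combining these and applying concatenation once more yields
\begin{equation*}
  I_a^{\g a} = I_a^{\cc}\cdot I_{\cc}^{a} = I_a^{a} = 1,
\end{equation*}
where the last equality holds because, in \er{cab} with $a=b$, the range of integration is a single point, forcing $C_a^a(f_1,\ldots,f_n) = 0$ for every $n \gqs 1$ and leaving only the constant term $1$ in \er{iab}.

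I do not expect a serious obstacle here. The only thing to check is that \er{msc} and \er{msg} remain valid when some endpoints lie at cusps, and the paper states both relations in exactly that generality; the requirement that paths approach cusps along geodesics guarantees absolute convergence of each $C_a^b$ entering the argument. If one wanted to avoid using cusps as endpoints altogether, one could alternatively conjugate $\g$ by the scaling matrix $\sc$ to reduce to the case of $\ms{1}{\ell}{0}{1}$ acting on $\sci a$ and estimate the resulting nested integrals via the exponential decay of the $f_i$ at $\infty$; but the telescoping argument above is cleaner and uses only formal properties already on the table.
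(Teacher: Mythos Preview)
Your argument is correct and is genuinely different from the paper's proof. The paper proceeds analytically: it passes to local coordinates at the fixed cusp $\cb$ of $\g$ via $\sb$, expands each $f_i$ in its Fourier series at $\cb$, and obtains the explicit representation \er{intx}. It then observes that for $b=\g a$ the integration variable $w_n$ runs over an interval of length $\ell\in\Z$ while the integrand is a polynomial in $\e(w_n)$ with no constant term, so the outer integral vanishes.

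Your route is purely formal: insert the fixed cusp $\cc$ as an intermediate point, use $\g\cc=\cc$ together with \er{msc} and \er{msg}, and telescope $I_a^{\g a}$ down to $I_a^a=1$. This is shorter and more conceptual, and it makes transparent that the result is really a consequence of the cocycle-type identities rather than of any analytic computation. What the paper's approach buys in exchange is the explicit formula \er{intx}, which feeds directly into the Example following Corollary~\ref{cr} and into the growth estimates of Proposition~\ref{poi}. Your alternative suggestion of conjugating by $\sc$ and exploiting exponential decay is in fact essentially what the paper does.
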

\begin{proof}
 Suppose $\g$ fixes the cusp $\cb$. Then $\sbi \g \sb = \pm \ms{1}{\ell}{0}{1}$ for some $\ell \in \Z$. We first consider $C_a^b$ for $b\in \H$. Using the change of variables $z_i=\sb w_i$ in \er{cab} yields
 \begin{align*}
   C_a^{b}(f_1, f_2, \dots,  f_n) &= \int_{\sbi a}^{\sbi b} f_{n}(\sb w_n) \int_{\sbi a}^{w_n} f_{n-1}(\sb w_{n-1}) \cdots \int_{\sbi a}^{w_2} f_1(\sb w_1) \, d\sb w_1 \cdots d\sb w_n \\
   & = \int_{\sbi a}^{\sbi b} (f_{n}|_2 \sb)( w_n) \int_{\sbi a}^{w_n} (f_{n-1}|_2 \sb)( w_{n-1})  \cdots \int_{\sbi a}^{w_2} (f_{1}|_2 \sb)( w_1)  \, dw_1 \cdots dw_n.
 \end{align*}
 Writing $u=\sbi a$, $v=\sbi b$, $\e(z):=e^{2\pi i z}$ and using \er{expnpar} shows
 \begin{multline} \label{intx}
   C_a^{b}(f_1, f_2, \dots,  f_n) =
   \sum_{m_1, \dots, m_n \in \Z_{\gqs 1}} c_\cb(m_1;f_1) \cdots c_\cb(m_n;f_n)\\
   \times
   \int_u^v \e(m_n w_n)\int_u^{w_n}\e(m_{n-1}w_{n-1}) \cdots \int_u^{w_2} \e(m_1 w_1) \, dw_1 \cdots dw_n.
 \end{multline}
 Note that \er{intx} is absolutely convergent because the coefficients $c_\cb(m_i;f_i)$ have polynomial growth in $m_i$ and the integrals have exponential decay when $u,v \in \H$. Taking $b=\g a$ we find
 \begin{equation*}
   v= \sbi \g a = \sbi \g \sb u = u+\ell.
 \end{equation*}
 Since the integrand in \er{intx} is a polynomial in $\e(w_n)$ with constant term zero, it follows that \er{intx} is zero for $b=\g a$ as desired.
\end{proof}

\begin{cor} \label{cr}
For any $a,b \in \H \cup \{\mathtt{cusps}\}$ and any parabolic $\g \in \G$ we have
\begin{gather}\label{ga}
  I_a^{\g b}=I^b_{\g a}=I^b_{a}, \\
  C_a^{\g b}(f_1, f_2, \dots,  f_n) = C^b_{\g a}(f_1, f_2, \dots,  f_n) = C_a^{b}(f_1, f_2, \dots,  f_n). \label{ga2}
\end{gather}
\end{cor}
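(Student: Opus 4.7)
The plan is to reduce everything to Proposition \ref{cab0} via the concatenation relation \er{msc} and the $\G$-invariance relation \er{msg}. First I observe that Proposition \ref{cab0}, together with the normalization \er{icd}, is equivalent to the statement $I_c^{\g c}=1$ for any $c\in\H$ and any parabolic $\g\in\G$, since the left side has every higher coefficient equal to $C_c^{\g c}(f_{i_1},\ldots,f_{i_n})=0$. Once \er{ga} is established, \er{ga2} follows by reading off the coefficient of each monomial $X_{i_1}\cdots X_{i_n}$ on both sides of \er{ga}.

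To prove \er{ga} when $a,b\in\H$, the argument is a one-line telescoping. By concatenation,
\begin{equation*}
  I_a^{\g b}=I_a^b\cdot I_b^{\g b}=I_a^b\cdot 1=I_a^b,
\end{equation*}
using Proposition \ref{cab0} applied at $b\in\H$. For the other equality, I would combine concatenation with $\G$-invariance:
\begin{equation*}
  I_{\g a}^b = I_{\g a}^{\g b}\cdot I_{\g b}^b = I_a^b \cdot I_{\g b}^b,
\end{equation*}
and then note that $I_b^{\g b}=1$ implies $I_{\g b}^b=1$ via $I_{\g b}^b \cdot I_b^{\g b}=I_{\g b}^{\g b}=1$ (so the power series $I_b^{\g b}=1$ has inverse $1$ in $\C\llangle X_1,\ldots,X_r\rrangle$).

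The only subtle point is that Proposition \ref{cab0} is stated for $a\in\H$, whereas the corollary allows $a$ or $b$ to be a cusp. I would handle this by inserting an auxiliary interior base point. Fix any $c\in\H$; by what has just been shown, $I_c^{\g c}=1$ and hence $I_{\g c}^c=1$. For arbitrary $a,b\in\H\cup\{\mathtt{cusps}\}$ write
\begin{equation*}
  I_a^{\g b} = I_a^c\cdot I_c^{\g b} = I_a^c\cdot I_c^{\g c}\cdot I_{\g c}^{\g b} = I_a^c\cdot 1\cdot I_c^b = I_a^b,
\end{equation*}
using $\G$-invariance in the last interior step to replace $I_{\g c}^{\g b}$ by $I_c^b$. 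The identity $I_{\g a}^b=I_a^b$ is proved symmetrically by splitting at $\g c$ instead of $c$. No part of this is technically demanding; the only thing to be careful about is that all the concatenations make sense, which is guaranteed by the absolute convergence of \er{cab} along geodesic paths to cusps noted after the definition. I do not expect any genuine obstacle—the corollary is essentially a repackaging of Proposition \ref{cab0} using the algebraic relations \er{msc} and \er{msg}.
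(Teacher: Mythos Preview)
Your proof is correct and follows essentially the same route as the paper: insert an auxiliary point $c\in\H$, use concatenation and $\G$-invariance to write $I_a^{\g b}=I_a^c\cdot I_c^{\g c}\cdot I_c^b$, and then apply Proposition \ref{cab0} to conclude $I_c^{\g c}=1$. The paper's argument is slightly terser (it skips the preliminary case $a,b\in\H$ and goes straight to the general base-point change), but the content is the same.
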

\begin{proof}
Let $c \in \H$ and we may change to this base point using
\begin{equation} \label{iabc}
  I_a^{\g b} = I_a^c \cdot I_c^{\g c}\cdot  I_c^b
\end{equation}
which follows as in \er{bpac}. Proposition \ref{cab0} implies that $I_c^{\g c}=1$ and hence $I_a^{\g b}=I^b_{a}$. The second identity in \er{ga} is similar and then \er{ga2} follows from \er{ga}.
\end{proof}

\begin{example}{\rm
To give an example of the relation \er{intx}, we  compute
\begin{multline*}
  \int_u^v \e(m_2 w_2)\int_u^{w_2}\e(m_{1}w_{1}) \, dw_1 dw_2 \\
  =
  \frac{1}{(2\pi i)^2}\left( \frac{\e((m_1+m_2)v)-\e((m_1+m_2)u)}{m_1(m_1+m_2)}
  + \frac{\e((m_1+m_2)u) - \e(m_1 u+m_2 v)}{m_1 m_2}\right).
\end{multline*}
 With the identity
 \begin{equation*}
   -\frac{1}{m_1(m_1+m_2)}+\frac{1}{m_1 m_2} = \frac{1}{m_2(m_1+m_2)}
 \end{equation*}
 we obtain, with $u=\sbi a$, $v=\sbi b$ as before,
 \begin{equation*}
  C_a^{b}(f_1, f_2)=\sum_{m_1,m_2 \in \Z_{\gqs 1}} \frac{c_\cb(m_1;f_1)  c_\cb(m_2;f_2)}{(2\pi i)^2}
 \left( \frac{\e((m_1+m_2)v)}{m_1(m_1+m_2)}
  + \frac{\e((m_1+m_2)u)}{m_2(m_1+m_2)}- \frac{\e(m_1 u+m_2 v)}{m_1 m_2}\right).
 \end{equation*}}
\end{example}
See \cite{ChoieIhara} for more examples of expressing iterated
integrals as multiple $L$-functions.

\subsection{Growth estimates for iterated
integrals}
Here we give some growth estimates  which will be needed for the convergence of Eisenstein series
formed with noncommutative modular symbols.

\begin{prop} \label{poi} Let $\cb$ be a cusp of $\G$ and let
  $a\in \H \cup \{\mathtt{cusps}\}$. For an implied constant independent
  of $b \in \H$,
\begin{equation} \label{incd}
   C_a^{b}(f_1, f_2, \dots,  f_n) \ll 1+|\log \Im(\sbi b)|^n.
\end{equation}
\end{prop}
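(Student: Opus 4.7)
The plan is to apply the concatenation identity to push the lower endpoint of the iterated integrals to the cusp $\cb$, use the scaling matrix $\sb$ to send $\cb$ to $\infty$, and then induct on $n$ along a vertical geodesic ray in $\H$.

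First, Proposition~\ref{prop:ii_properties1}(iii) applied at the intermediate point $\cb$ gives
\[
 C_a^b(f_1, \ldots, f_n) = \sum_{i=0}^n C_a^{\cb}(f_1, \ldots, f_i) \cdot C_{\cb}^b(f_{i+1}, \ldots, f_n).
\]
The factors $C_a^{\cb}(f_1, \ldots, f_i)$ depend only on $a$, $\cb$, and the forms $f_j$, and are finite by the convergence assumption for paths ending at cusps. So the problem reduces to showing $\bigl|C_{\cb}^b(g_1, \ldots, g_k)\bigr| \ll 1 + \bigl|\log \Im(\sbi b)\bigr|^k$ uniformly in $b \in \H$, for $0 \leq k \leq n$ and any weight-$2$ cusp forms $g_1,\ldots,g_k$ for $\G$.

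Next I would perform the change of variables $z_j = \sb w_j$, exactly as in the proof of Proposition~\ref{cab0}, rewriting $C_{\cb}^b(g_1,\ldots,g_k)$ as
\[
 J_k(v) := \int_\infty^v \tilde g_k(w_k) \int_\infty^{w_k} \tilde g_{k-1}(w_{k-1}) \cdots \int_\infty^{w_2} \tilde g_1(w_1)\, dw_1 \cdots dw_k, \qquad v := \sbi b,
\]
where $\tilde g_j := g_j|_2 \sb$ is a weight $2$ holomorphic cusp form at $\infty$ for $\sbi \G \sb$. Two standard bounds drive what follows: the $\G$-invariant pointwise estimate $\Im(w)\cdot|\tilde g_j(w)| \leq M_j$ on all of $\H$, and the exponential decay $|\tilde g_j(w)| \ll e^{-2\pi \Im w}$ for $\Im w \geq 1$ from the Fourier expansion. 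The bound $|J_k(v)| \ll 1 + |\log \Im v|^k$ then follows by induction on $k$ from the recursion $J_k(v) = \int_\infty^v \tilde g_k(w)\, J_{k-1}(w)\, dw$, parametrized along the vertical ray $w = \Re v + it$ with $t$ running from $\infty$ down to $\Im v$ and split at $t = 1$: the tail $t \geq 1$ contributes $O(1)$ by exponential decay of $\tilde g_k$ against the polynomial $(1 + |\log t|^{k-1})$, while the bulk $\Im v \leq t \leq 1$ (nonempty only when $\Im v < 1$) uses $|\tilde g_k(w)| \leq M_k/t$ to give $\int_{\Im v}^{1} (1 + |\log t|^{k-1})\,dt/t \ll 1 + |\log \Im v|^k$.

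The main obstacle—really the whole point of the estimate—is combining the two growth regimes correctly. The pointwise bound $y|\tilde g_j(w)| \ll 1$ alone would not make the outer integral converge at $\infty$, while exponential decay alone is useless as $t \to 0$, where $\tilde g_j(w)$ need not be small. The logarithmic power $|\log \Im v|^k$ arises precisely from iterating $\int |\log t|^{k-1}\, dt/t = |\log t|^k/k$ on the short interval $[\Im v, 1]$; any cruder bound in either regime would introduce a power of $1/\Im v$, contradicting the claim.
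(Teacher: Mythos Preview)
Your proof is correct and shares the inductive skeleton of the paper's argument, but the execution differs in a meaningful way. The paper concatenates at the point $\sb i \in \H$ rather than at the cusp $\cb$, and then uses the parabolic invariance of Corollary~\ref{cr} to translate the lower endpoint horizontally so that its real part lies within $1/2$ of $\Re(\sbi b)$; the integration path is then a bounded horizontal segment at height $1$ followed by a vertical segment, and only the single estimate $y|f(z)|\ll 1$ is needed throughout. Your route instead pushes the lower endpoint all the way to the cusp and integrates along the full vertical ray from $\infty$, which forces you to invoke the exponential decay $|\tilde g_j(w)|\ll e^{-2\pi \Im w}$ on the tail $t\geq 1$ in addition to $y|\tilde g_j|\ll 1$ on the bulk. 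What you gain is that you never need Corollary~\ref{cr} (parabolic invariance of the iterated integrals); what the paper gains is a proof using only the one uniform cusp-form bound, with the convergence at $\infty$ handled structurally by starting from a finite height. Both are clean, and the resulting estimates are identical.
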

\begin{proof}
  Recall that for any cusp form $f$ of weight 2, we have
  $y|f(z)| \ll 1$ for all $z\in \H$. We use induction on $n$ to prove the proposition, the
  case $n=0$ being clear. First consider
  $C_{\sb i}^{b}(f_1, \dots, f_n)$ with imaginary number $i$. There
  exists $\g \in \G_\cb$ such that
  $\Re(\sbi \g \sb i) = \Re(\sbi b)+t$ for $|t|\lqs 1/2$. By
  Corollary \ref{cr},
  $C_{\g \sb i}^{b}(f_1, \dots, f_n) = C_{\sb i}^{b}(f_1, \dots,
  f_n)$. Hence
\begin{align*}
  C_{\sb i}^{b}(f_1,  \dots,  f_n) & = \int_{\g \sb i}^{b} f_n(w) \cdot C_{\g \sb i}^{w}(f_1,  \dots,  f_{n-1}) \, dw\\
   & = \int_{\sbi \g \sb i}^{\sbi b} (f_n|_2 \sb)(z) \cdot C_{\sb i}^{\sb z}(f_1,  \dots,  f_{n-1}) \, dz.
\end{align*}
With our cusp form bound and induction, the integrand is $\ll (1+|\log y|^{n-1})/y$. Hence
\begin{align*}
  C_{\sb i}^{b}(f_1,  \dots,  f_n) & \ll  \int_{i-1/2}^{i+1/2}\frac{1+|\log y|^{n-1}}{y} \, dx + \int_{1}^{\Im(\sbi b)}\frac{1+|\log y|^{n-1}}{y} \, dy\\
   & \ll 1+ |\log \Im(\sbi b)|^n.
\end{align*}
Finally, use the identity
\begin{equation} \label{cabi}
    C_a^{b}(f_1, f_2, \dots,  f_n) = \sum_{0 \lqs j \lqs n} C_a^{\sb i}(f_1,  \dots,  f_j) C_{\sb i}^{b}(f_{j+1},  \dots,  f_n)
\end{equation}
to obtain \er{incd} and complete the induction.
\end{proof}

Note that, with Proposition \ref{prop:ii_properties1} (ii) it is clear that we may replace
$C_a^{b}(\dots) $ by $C_b^{a}( \dots)$ in
the statement of Proposition \ref{poi}.

\begin{cor} \label{ghj}
Let $\cb$ be a cusp of $\G$. For  an implied constant independent of $a,$ $b \in \H$,
\begin{equation*} 
   C_a^{b}(f_1, f_2, \dots,  f_n) \ll 1+|\log \Im(\sbi a)|^n+|\log \Im(\sbi b)|^n.
\end{equation*}
\end{cor}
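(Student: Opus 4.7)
The plan is to reduce to Proposition \ref{poi} by splitting the iterated integral at a conveniently chosen \emph{fixed} interior point. Concretely, pick the reference point $\sb i \in \H$ (which is independent of the varying $a,b$) and apply the concatenation formula \er{cabi} from the proof of Proposition \ref{poi}:
\begin{equation*}
 C_a^{b}(f_1,  \dots,  f_n) = \sum_{0 \lqs j \lqs n} C_a^{\sb i}(f_1,  \dots,  f_j)\cdot C_{\sb i}^{b}(f_{j+1},  \dots,  f_n).
\end{equation*}
This decomposes each iterated integral into two pieces, each of which has $\sb i$ as one endpoint. The point of choosing the split at $\sb i$ (and not, say, at $a$ or $b$) is precisely to put both factors into a form where Proposition \ref{poi} is applicable with an implied constant that does not depend on $a$ or $b$.

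Next I would estimate each factor. For the second factor, Proposition \ref{poi} applied with base point $\sb i \in \H$ and varying endpoint $b \in \H$ directly gives
\begin{equation*}
 C_{\sb i}^{b}(f_{j+1},\dots, f_n) \ll 1 + |\log \Im(\sbi b)|^{n-j}
\end{equation*}
with implied constant independent of $b$. For the first factor I would use Proposition \ref{prop:ii_properties1} (ii) to reverse the order:
\begin{equation*}
  C_a^{\sb i}(f_1,\dots,f_j) = (-1)^j C_{\sb i}^{a}(f_j,\dots,f_1),
\end{equation*}
and then apply Proposition \ref{poi} again, this time with fixed base $\sb i$ and variable endpoint $a \in \H$, to obtain a bound $\ll 1 + |\log \Im(\sbi a)|^j$ with implied constant independent of $a$. (This is exactly the observation recorded in the remark immediately following Proposition \ref{poi}.)

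Finally I would combine the two estimates. Multiplying out and using the elementary inequality $x^j y^{n-j} \lqs x^n + y^n$ for nonnegative reals $x,y$ and $0 \lqs j \lqs n$, each summand is bounded by a constant times $1 + |\log \Im(\sbi a)|^n + |\log \Im(\sbi b)|^n$, and summing over the finitely many $j$ yields the claimed bound. The whole argument is essentially bookkeeping built on Proposition \ref{poi}, so there is no real obstacle; the only thing to be careful about is choosing a fixed anchor point (here $\sb i$) so that the constants in the two applications of Proposition \ref{poi} are genuinely independent of both $a$ and $b$.
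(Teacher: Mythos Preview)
Your proposal is correct and is exactly the argument the paper has in mind: the paper's one-line proof reads ``This follows from Proposition \ref{poi} and \er{cabi},'' and your write-up simply unpacks this by splitting at $\sb i$ via \er{cabi}, bounding each factor with Proposition \ref{poi} (using the reversal remark for the first factor), and combining. There is nothing to change.
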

\begin{proof}
This follows from Proposition \ref{poi} and \er{cabi}.
\end{proof}

\begin{cor} \label{hjk}
Let $\cb$ be a cusp of $\G$ and let $a\in \H \cup \{\mathtt{cusps}\}$. For  an implied constant independent of $\g \in \G$ and $z \in \H$,
\begin{equation*} 
   C_{\g a}^{a}(f_1, f_2, \dots,  f_n) \ll 1+|\log \Im(\sbi z)|^n +|\log \Im(\sbi \g z)|^n.
\end{equation*}
\end{cor}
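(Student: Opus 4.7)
The plan is to localise the noncommutative modular symbol $I_{\g a}^{a}$ at the points $z$ and $\g z$ (both in $\H$) and then invoke the growth estimates already proved. The fact that $a$ may be a cusp is dealt with by pushing it out to the boundary on only one side of each integral.

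First, I would apply the concatenation relation \er{msc} twice to write
$$I_{\g a}^{a}=I_{\g a}^{\g z}\cdot I_{\g z}^{z}\cdot I_{z}^{a},$$
and then use the $\G$-invariance \er{msg} to replace $I_{\g a}^{\g z}$ by $I_{a}^{z}$. Reading off the coefficient of $X_{1}\cdots X_{n}$ on both sides yields the identity
$$C_{\g a}^{a}(f_1,\dots,f_n)=\sum_{0\lqs i\lqs j\lqs n}C_{a}^{z}(f_1,\dots,f_i)\,C_{\g z}^{z}(f_{i+1},\dots,f_j)\,C_{z}^{a}(f_{j+1},\dots,f_n).$$

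Next, I would estimate each of the three factors separately. The middle factor has both endpoints in $\H$, so Corollary \ref{ghj} bounds it by $1+|\log\Im(\sbi z)|^{j-i}+|\log\Im(\sbi\g z)|^{j-i}$. For the outer factors the base point $a$ is fixed (possibly a cusp), so Proposition \ref{poi}, together with the remark immediately after its proof that permits swapping the endpoints, gives
$$C_{a}^{z}(f_1,\dots,f_i)\ll 1+|\log\Im(\sbi z)|^{i}, \qquad C_{z}^{a}(f_{j+1},\dots,f_n)\ll 1+|\log\Im(\sbi z)|^{n-j},$$
with implied constants depending on $a$, the $f_\ell$, $\cb$ and $n$, but not on $z$ or $\g$.

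Multiplying the three bounds, each $(i,j)$-summand is a sum of monomials $u^{\alpha}v^{\beta}$ in $u:=|\log\Im(\sbi z)|$ and $v:=|\log\Im(\sbi\g z)|$ with $\alpha+\beta\lqs n$. A short case analysis, separating $u\lqs 1$ from $u\gqs 1$ and similarly for $v$, gives the elementary inequality $u^{\alpha}v^{\beta}\lqs 1+u^{n}+v^{n}$ for all $(\alpha,\beta)$ in this range. Summing over the finitely many pairs $(i,j)$ yields the stated bound. There is no serious obstacle: the whole point of the corollary is that one can localise $I_{\g a}^{a}$ at any convenient pair of points in $\H$, and the only care required is to note that the implied constants in Proposition \ref{poi} and Corollary \ref{ghj} are allowed to depend on the fixed data.
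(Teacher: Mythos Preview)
Your proof is correct and follows essentially the same approach as the paper: the same factorisation $I_{\g a}^{a}=I_{a}^{z}\cdot I_{\g z}^{z}\cdot I_{z}^{a}$, the same triple sum for the coefficients, and the same appeal to Proposition~\ref{poi} and Corollary~\ref{ghj}. You even spell out the final elementary step more explicitly than the paper does.
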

\begin{proof}
Since $I_{\g a}^{a} =I_{\g a}^{\g z}I_{\g z}^{ z}I_{z}^{a} = I_{ a}^{ z}I_{\g z}^{ z}I_{z}^{a}$, we have
\begin{equation*}
  C_{\g a}^{a}(f_1, f_2, \dots,  f_n) = \sum_{0 \lqs i\lqs j\lqs n} C_a^{z}(f_1,  \dots,  f_i) C_{\g z}^{z}(f_{i+1},  \dots,  f_j) C_z^{a}(f_{j+1},  \dots,  f_n).
\end{equation*}
With Proposition  \ref{poi} and Corollary \ref{ghj}, this is
\begin{multline*}
  \ll \sum_{0 \lqs i\lqs j\lqs n} \left(1+|\log \Im(\sbi z)|^i\right)\left(1+|\log \Im(\sbi z)|^{j-i}+|\log \Im(\sbi \g z)|^{j-i}\right)\\
\times \left(1+|\log \Im(\sbi z)|^{n-j}\right)
\end{multline*}
and the result follows.
\end{proof}

The implied constants in Proposition  \ref{poi} and Corollaries \ref{ghj}, \ref{hjk} may depend on the remaining parameters. For example, the implied constant in Corollary \ref{hjk} depends on $\cb$, $a$, $n$ and $f_1, \dots, f_n$.

\section{Eisenstein series twisted by noncommutative modular symbols}\label{sec:eis_series}

In this section we introduce our primary object of study, the
Eisenstein series formed with noncommutative modular symbols.  The
series we define here are slightly more general than those described
in the introduction.

\subsection{Definition and convergence} \label{31}
Let $\mathbf{f}=(f_1,f_2, \dots, f_r)$ and
$\mathbf{g}=(g_1,g_2, \dots, g_t)$ be lists of weight $2$ cusp
forms. To each $f_i$ we associate the variable $X_i$ and to each $g_i$
we associate $Y_i$. Let $I_a^b=I_a^b(\mathbf{f})$ be as before in
\er{iab} and put
 \begin{equation} \label{jab}
  J_a^b =  J_a^b(\mathbf{g}) = 1 + \sum_i  C_a^b(g_i) \cdot Y_i  + \sum_{i,j} C_a^b(g_i, g_j) \cdot Y_i Y_j  + \cdots.
\end{equation}
Then $I_a^b \cdot \overline{J_a^b}$ is an element of the ring of
formal power series in the variables $X_i$ and $\overline{Y_i}$ with
coefficients in $\C$. We take these variables to be non-commuting,
except for the relations $X_i\overline{Y_j}=\overline{Y_j}X_i$ for
$1\lqs i\lqs r$, $1\lqs j\lqs t$. We may write
\begin{multline} \label{wo}
  I_a^b \cdot \overline{J_a^b} = 1+  \sum_i  C_a^b(f_i) \cdot X_i +  \sum_i  \overline{C_a^b(g_i)} \cdot \overline{Y_i}\\
  + \sum_{i,j} C_a^b(f_i, f_j) \cdot X_i X_j +\sum_{i,j} C_a^b(f_i) \overline{C_a^b(g_j)} \cdot X_i \overline{Y_j}
  +\sum_{i,j} \overline{C_a^b(g_i, g_j)} \cdot \overline{Y_i Y_j} + \cdots .
\end{multline}


Fix a base point $a\in \H \cup \{\mathtt{cusps}\}$.  For $z\in \H$ and
any cusp $\cb$ of $\G$, define the {\em Eisenstein series twisted by
  noncommutative modular symbols} as
\begin{equation} \label{edef}
  \mathcal{E}_{\cb}(z,s):=\sum_{\g \in \G_\cb\backslash \G}  I_{\g a}^{a} \cdot \overline{J_{\g a}^{a}} \cdot \Im(\sbi\g z)^s.
\end{equation}
This agrees with our earlier definition \er{def:etbncms} when
$\mathbf{g}$ is empty.  The summands are well-defined by Corollary
\ref{cr}. Expanding with \er{wo} gives
\begin{multline} \label{wo2}
 \mathcal{E}_{\cb}(z,s) = E_{\cb}(z,s) +  \sum_i  E_{\cb}(z,s; f_i) \cdot X_i +  \sum_i  E_{\cb}(z,s; \overline{g_i}) \cdot \overline{Y_i}\\
  + \sum_{i,j} E_{\cb}(z,s; f_i, f_j) \cdot X_i X_j +\sum_{i,j} E_{\cb}(z,s; f_i, \overline{g_j}) \cdot X_i \overline{Y_j}
  +\sum_{i,j}E_{\cb}(z,s; \overline{g_i}, \overline{g_j})\cdot \overline{Y_i Y_j} + \cdots
\end{multline}
where
\begin{equation}
  E_\cb(z,s;f_{i_1}, \dots ,f_{i_m},\overline{g_{j_1}},\dots,\overline{g_{j_n}})  := \sum_{\g \in \G_\cb\backslash \G} C^a_{\g a}(f_{i_1}, \dots ,f_{i_m}) \overline{C^a_{\g a}(g_{j_1}, \dots ,g_{j_n})} \Im(\sbi \g z)^s. \label{eis2}
\end{equation}


\begin{theorem} \label{thbn}
The series $E_\cb(z,s;f_{1}, \dots ,f_{m},\overline{g_{1}},\dots,\overline{g_{n}})$ from \er{eis2} is absolutely convergent for $\Re(s)>1$. This convergence is uniform for $s$ in compact sets.
\end{theorem}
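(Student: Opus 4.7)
The strategy I would take is to dominate each summand of \er{eis2} by a constant multiple of $\Im(\sbi \g z)^{\Re(s)-\epsilon}$ for some small $\epsilon>0$ and then compare with the classical Eisenstein series, whose convergence for $\Re(s)>1$ is standard. Two ingredients make this possible. First, Corollary \ref{hjk} already gives
\begin{equation*}
  \bigl|C^a_{\g a}(f_{i_1},\dots,f_{i_m})\,\overline{C^a_{\g a}(g_{j_1},\dots,g_{j_n})}\bigr| \ll \bigl(1+|\log\Im(\sbi z)|^m+|\log\Im(\sbi\g z)|^m\bigr)\bigl(1+|\log\Im(\sbi z)|^n+|\log\Im(\sbi\g z)|^n\bigr),
\end{equation*}
with implied constant independent of $\g$. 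Second, the elementary inequality $|\log y|^N \ll_{N,\epsilon} y^\epsilon + y^{-\epsilon}$, valid for all $y>0$, $N\gqs 0$ and $\epsilon>0$, lets us trade any fixed polylogarithmic growth for an arbitrarily small power.

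Given a compact subset $K\subset\{s\in\C:\Re(s)>1\}$, I would put $\sigma_0 := \min_{s\in K}\Re(s) > 1$ and choose $\epsilon>0$ with $\sigma_0 - \epsilon > 1$. Combining the two bounds above and absorbing the $z$-dependent logarithmic factors into a single constant $C(z,\epsilon)$ gives
\begin{equation*}
  \bigl|C^a_{\g a}(f_{i_1},\dots,f_{i_m})\,\overline{C^a_{\g a}(g_{j_1},\dots,g_{j_n})}\cdot \Im(\sbi\g z)^s\bigr| \lqs C(z,\epsilon)\bigl(\Im(\sbi\g z)^{\Re(s)+\epsilon}+\Im(\sbi\g z)^{\Re(s)-\epsilon}\bigr),
\end{equation*}
uniformly in $\g$ and in $s\in K$. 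For all but finitely many cosets $\G_\cb\g\in\G_\cb\backslash\G$ one has $\Im(\sbi\g z)\lqs 1$, a standard reduction-theory fact (only finitely many $\G_\cb$-orbits of $z$ visit a neighborhood of the cusp $\cb$). On those cosets the bracketed expression is $\lqs 2\,\Im(\sbi\g z)^{\sigma_0-\epsilon}$, while the finitely many cosets with $\Im(\sbi\g z)>1$ contribute a sum bounded uniformly for $s\in K$.

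Summing over $\g\in\G_\cb\backslash\G$ then majorizes \er{eis2} by a bounded remainder plus a constant multiple of the classical Eisenstein series $E_\cb(z,\sigma_0-\epsilon)=\sum_\g \Im(\sbi\g z)^{\sigma_0-\epsilon}$, which converges absolutely since $\sigma_0-\epsilon>1$. This yields both absolute convergence of \er{eis2} for $\Re(s)>1$ and uniform convergence on compact subsets. The only mild obstacle is the bookkeeping needed to absorb the polylogarithmic factors supplied by Corollary \ref{hjk}; once the $|\log y|^N \ll y^{\pm\epsilon}$ trick is invoked, the argument reduces cleanly to the convergence of a classical Eisenstein series.
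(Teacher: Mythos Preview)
Your proposal is correct and follows essentially the same approach as the paper: bound the iterated integrals via Corollary \ref{hjk}, convert the polylogarithmic factors to small powers via $|\log y|^N \ll y^{\epsilon}+y^{-\epsilon}$, and then dominate by classical Eisenstein series. The only difference is that your detour through ``finitely many cosets with $\Im(\sbi\g z)>1$'' is unnecessary: since $\Re(s)+\epsilon>1$ as well, the paper simply bounds the whole series by $E_\cb(z,\Re(s))+E_\cb(z,\Re(s)+\varepsilon(m+n))+E_\cb(z,\Re(s)-\varepsilon(m+n))$, all of which converge.
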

\begin{proof}
In the case $m=n=0$ the theorem is true for $E_\cb(z,s)$, which is
just the ordinary real-analytic Eisenstein series for $\Gamma.$
By Corollary \ref{hjk}, the summand of the series corresponding to $\g$ is bounded by a constant times
\begin{equation} \label{hyu}
   \left( 1+|\log \Im(\sbi z)|^{m+n} +|\log \Im(\sbi \g z)|^{m+n}\right) \Im(\sbi \g z)^{\Re(s)}.
\end{equation}
The elementary inequality
\begin{equation*}
  |\log y|^k < (2/\varepsilon)^k(y^{\varepsilon k}+y^{-\varepsilon k})
\end{equation*}
for all $y,$ $\varepsilon >0$ and $k\gqs 0$, implies the first factor in \er{hyu} is
\begin{equation*}
 \ll  \Im(\sbi z)^{\varepsilon(m+n)}+ \Im(\sbi z)^{-\varepsilon(m+n)}+\Im(\sbi \g z)^{\varepsilon(m+n)}+\Im(\sbi \g z)^{-\varepsilon(m+n)}.
\end{equation*}
Hence
\begin{multline*}
  \sum_{\g \in \G_\cb\backslash \G} \left|C^a_{\g a}(f_{1}, \dots ,f_{m}) \overline{C^a_{\g a}(g_{1}, \dots ,g_{n})} \Im(\sbi \g z)^s \right| \\
  \ll E_\cb(z,\Re(s))+ E_\cb(z,\Re(s)+\varepsilon(m+n)) + E_\cb(z,\Re(s)-\varepsilon(m+n))
\end{multline*}
for any $\varepsilon >0$, with the implied constant depending on $\varepsilon$ and $z$. The result follows.
\end{proof}

\subsection{Transformation properties}  \label{32}

\begin{prop}\label{prop:tp}
For $\Re(s)>1$ and all $\delta \in \G$
\begin{equation} \label{eb}
  \mathcal{E}_\cb(\delta z,s) = I_{a}^{\delta a} \cdot \overline{J_{a}^{\delta a}} \cdot \mathcal{E}_\cb(z,s).
\end{equation}
\end{prop}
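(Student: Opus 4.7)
The plan is a reindexing argument using the two key properties \er{msc} and \er{msg} of the symbols $I_a^b$, $J_a^b$, together with the partial commutativity $X_i\overline{Y_j}=\overline{Y_j}X_i$. Concretely, starting from
\begin{equation*}
  \mathcal{E}_\cb(\delta z,s)=\sum_{\g\in\G_\cb\backslash\G}I_{\g a}^{a}\cdot\overline{J_{\g a}^{a}}\cdot\Im(\sbi\g\delta z)^s,
\end{equation*}
I would first substitute $\g\mapsto\g\delta^{-1}$, which is a bijection on $\G_\cb\backslash\G$ (right multiplication preserves the coset structure on the left), turning the exponent back into $\Im(\sbi\g z)^s$ while replacing the base point $\g a$ by $\g\delta^{-1}a$.

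Next, the factor $I_{\g\delta^{-1}a}^{a}$ should be split using concatenation \er{msc} through the intermediate point $\g a$:
\begin{equation*}
  I_{\g\delta^{-1}a}^{a}=I_{\g\delta^{-1}a}^{\g a}\cdot I_{\g a}^{a},
\end{equation*}
after which two applications of $\G$-invariance \er{msg} (first with $\g$, then with $\delta$) give $I_{\g\delta^{-1}a}^{\g a}=I_{\delta^{-1}a}^{a}=I_{a}^{\delta a}$. Exactly the same manipulation applied to $J$ yields $J_{\g\delta^{-1}a}^{a}=J_{a}^{\delta a}\cdot J_{\g a}^{a}$, so the general summand becomes
\begin{equation*}
  I_{a}^{\delta a}\cdot I_{\g a}^{a}\cdot \overline{J_{a}^{\delta a}}\cdot\overline{J_{\g a}^{a}}\cdot\Im(\sbi\g z)^s.
\end{equation*}

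Now I would use the commutation relation $X_i\overline{Y_j}=\overline{Y_j}X_i$ to move $I_{\g a}^{a}$ (a power series in the $X_i$) past $\overline{J_{a}^{\delta a}}$ (a power series in the $\overline{Y_j}$); crucially, I never need to move any $X$-series past another $X$-series, so the non-commutativity within $\C\llangle X_1,\dots,X_r\rrangle$ is not an issue. This lets me pull the $\delta$-dependent prefactor $I_{a}^{\delta a}\cdot\overline{J_{a}^{\delta a}}$ out of the sum on the left, leaving precisely $\mathcal{E}_\cb(z,s)$ behind. The termwise rearrangement and pulling out of the prefactor coefficient-by-coefficient are justified by the absolute convergence from Theorem \ref{thbn}.

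The main thing to watch is the ordering: since $I_{a}^{\delta a}$ and $I_{\g a}^{a}$ do not commute in general, the prefactor must appear on the \emph{left}, which is exactly what the concatenation $I_{\g\delta^{-1}a}^{a}=I_{a}^{\delta a}\cdot I_{\g a}^{a}$ delivers. The only genuinely nontrivial algebraic point is recognizing that the mixed $X$-$\overline{Y}$ commutation is both available and sufficient to separate the $I$ and $J$ factors cleanly; once that is in place, the identity falls out of the substitution.
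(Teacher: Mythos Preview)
Your proposal is correct and follows essentially the same approach as the paper's own proof: reindex via $\g\mapsto\g\delta^{-1}$, split $I_{\g\delta^{-1}a}^a$ (and $J$) through $\g a$ using concatenation and $\G$-invariance, then factor the $\delta$-dependent prefactor out of the sum. You are in fact slightly more explicit than the paper in pointing out that the partial commutativity $X_i\overline{Y_j}=\overline{Y_j}X_i$ is what allows $I_{\g a}^{a}$ to be moved past $\overline{J_a^{\delta a}}$, a step the paper performs silently.
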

\begin{proof}
We have
\begin{align*}
  \mathcal{E}_\cb(\delta z,s) & = \sum_{\g \in \G_\cb\backslash \G}  I_{\g a}^{a} \cdot \overline{J_{\g a}^{a}} \cdot \Im(\sbi\g \delta z)^s \\
   & = \sum_{\g \in \G_\cb\backslash \G}  I_{\g \delta^{-1} a}^{a} \cdot \overline{J_{\g \delta^{-1} a}^{a}} \cdot \Im(\sbi\g z)^s.
\end{align*}
Then
 $ I_{\g \delta^{-1} a}^{a} = I_{\g \delta^{-1} a}^{\g a}I_{\g  a}^{a} = I_{ \delta^{-1} a}^{ a}I_{\g  a}^{a} = I^{ \delta a}_{ a}I_{\g  a}^{a}$
 and similarly  $J_{\g \delta^{-1} a}^{a}=J^{ \delta a}_{ a}J_{\g  a}^{a}.$
Using these relations we obtain
\begin{align*}
  \mathcal{E}_\cb(\delta z,s) & = \sum_{\g \in \G_\cb\backslash \G}  I^{ \delta a}_{ a}I_{\g  a}^{a} \cdot \overline{J^{ \delta a}_{ a}J_{\g  a}^{a}} \cdot \Im(\sbi\g z)^s\\
 & =  I^{ \delta a}_{ a} \overline{J^{ \delta a}_{ a}} \sum_{\g \in \G_\cb\backslash \G} I_{\g  a}^{a} \cdot \overline{J_{\g  a}^{a}} \cdot \Im(\sbi\g z)^s. \qedhere
\end{align*}
\end{proof}

The relation \er{eb} elegantly encapsulates a lot of information.
Comparing corresponding coefficients shows
\begin{multline}  \label{eb2}
  E_\cb(\delta z,s;f_1, \dots ,f_m, \overline{g_1}, \dots, \overline{g_n}) \\
  = \sum_{i=0}^m \sum_{j=0}^n
   C_a^{\delta  a}(f_1,  \dots,  f_i)\overline{C_a^{\delta  a}(g_1,  \dots,  g_j)}
   E_\cb(z,s;f_{i+1}, \dots ,f_m, \overline{g_{j+1}}, \dots, \overline{g_n}).
\end{multline}
When $\mathbf{f}=()$, a similar formula is given by Diamantis and Sim
\cite[Eq. (3.30)]{DS}.

\begin{example}{\rm Another special case of Proposition \ref{prop:tp}
    appears in \cite{JOS08}.  Suppose that $\mathcal{E}_{\cb}(z,s)$ is defined with modular symbols $I_a^b(f)$ and $J_a^b(g)$ with each depending on single cusp forms. Then the coefficients in the series \er{wo2} take the form
\begin{align}
  E_\cb(z,s;\underbrace{f, \dots ,f}_m, \underbrace{\overline{g}, \dots, \overline{g}}_n) & =  \sum_{\g \in \G_\cb\backslash \G} C^a_{\g a}(f, \dots ,f) \overline{C^a_{\g a}(g, \dots ,g)} \Im(\sbi \g z)^s \notag\\
   & =  \sum_{\g \in \G_\cb\backslash \G} \frac{1}{m!}\biggl( \int^a_{\g a} f(u) \, du \biggr)^m \frac{1}{n!}\biggl(\overline{ \int^a_{\g a} g(u) \, du} \biggr)^n \Im(\sbi \g z)^s \notag\\
   & =  \frac{(-1)^m}{(2\pi i)^{m+n}}  \frac{1}{m!n!} \sum_{\g \in \G_\cb\backslash \G} \s{\g}{f}^m \overline{ \s{\g}{g}}^n \Im(\sbi \g z)^s \label{emnx}
\end{align}
where the modular symbol pairing  was defined in \er{mod}.
The series in \er{emnx} is studied in detail in \cite{PeRi}, \cite{JOS08} and written as
\begin{equation}\label{emnw}
  E^{m,n}_\cb(z,s;f,g) := \sum_{\g \in \G_\cb\backslash \G} \s{\g}{f}^m \overline{ \s{\g}{g}}^n \Im(\sbi \g z)^s.
\end{equation}
It follows from relation \er{eb}, by translating the terms of \er{eb2} with \er{emnx}, that
\begin{equation*}
  E^{m,n}_\cb(\delta z,s;f,g) = \sum_{i=0}^m\sum_{j=0}^n \binom{m}{i} \binom{n}{j} (-\s{\g}{f})^i (-\overline{\s{\g}{g}})^j E^{m-i,n-j}_\cb(z,s;f,g)
\end{equation*}
for all $\delta \in \G.$ This is Lemma 4.1 of \cite{JOS08}.}
\end{example}

The results in Sections \ref{31} and \ref{32} complete the proof of Theorem \ref{thm:intro}, except for the verification of \er{lap}. But this follows from $\Delta y^s=s(s-1)y^s$ as in the classical case for $E_\cb(z,s)$.

\subsection{Fourier expansions}
Write $z=x+iy$ for $z\in \H$. Let $\ca$ and $\cb$ be cusps for $\G$. By \cite[Thm. 3.4]{IwSp}, the Fourier expansion of the classical real-analytic Eisenstein series is
\begin{equation}\label{rae}
  E_\ca(\sb z,s) = \delta_{\ca\cb}y^s +\phi_{\ca\cb}(s) y^{1-s} + \sum_{k \in \Z_{\neq 0}} \phi_{\ca\cb}(k,s) W_s(kz).
\end{equation}
The function $W_s(z)$ is a Whittaker function and $\phi_{\ca\cb}(s)$, $\phi_{\ca\cb}(k,s)$ may be expressed in terms of Kloosterman sums. Also $ \delta_{\ca\cb}$ takes the value $1$ if $\ca$ and $\cb$ are $\G$-equivalent and is $0$ otherwise.

The same proof gives the expansion
\begin{equation}\label{rae2}
  E_\ca(\sb z,s;f_1, \dots ,f_m, \overline{g_1}, \dots, \overline{g_n})) =  \phi_{\ca\cb}(s;f_1, \dots, \overline{g_n})) y^{1-s} + \sum_{k \in \Z_{\neq 0}} \phi_{\ca\cb}(k,s;f_1,  \dots, \overline{g_n})) W_s(kz)
\end{equation}
 for $m+n\gqs 1$.
The coefficients $ \phi_{\ca\cb}(s;f_1, \dots, \overline{g_n}))$ and $ \phi_{\ca\cb}(k,s;f_1, \dots, \overline{g_n}))$ may be written  in terms of  Kloosterman sums twisted by iterated integrals, similarly to \cite[Eqs. (1.2),(1.3)]{OS}. It is an interesting question to find explicit forms for these coefficients. See \cite[Remark 5.5]{GOS} and \cite[Sect. 4.1]{Ri} for the determination of $\phi_{\ca\cb}(s;f_1)$. In Bruggeman-Diamantis  \cite{BD},   $\phi_{\ca\cb}(s;f_1)$ and $\phi_{\ca\cb}(k,s;f_1)$ are expressed in terms of $L$-functions and shifted convolution sums.

Combining the expansions \er{rae2} gives the Fourier expansion of $\mathcal{E}_{\ca}(z,s)$ as
\begin{equation}\label{rae3}
  \mathcal{E}_\ca(\sb z,s) = \delta_{\ca\cb}y^s + \Phi_{\ca\cb}(s) y^{1-s} + \sum_{k \in \Z_{\neq 0}} \Phi_{\ca\cb}(k,s) W_s(kz)
\end{equation}
where
\begin{multline} \label{phab}
 \Phi_{\ca\cb}(s)  =\phi_{\ca\cb}(s) +  \sum_i  \phi_{\ca\cb}(s; f_i) \cdot X_i +  \sum_i  \phi_{\ca\cb}(s; \overline{g_i}) \cdot \overline{Y_i}\\
  + \sum_{i,j} \phi_{\ca\cb}(s; f_i, f_j) \cdot X_i X_j +\sum_{i,j} \phi_{\ca\cb}(s; f_i, \overline{g_j}) \cdot X_i \overline{Y_j}
  +\sum_{i,j}\phi_{\ca\cb}(s; \overline{g_i}, \overline{g_j})\cdot \overline{Y_i Y_j} + \cdots
\end{multline}
and  $\Phi_{\ca\cb}(k,s)$ is a similar series.

\section{Meromorphic continuation}

The meromorphic continuation of the classical Eisenstein series
$E_\ca(z,s)$ to all $s\in \C$ may be shown using its Fourier
expansion in simple cases, such as when $\G=\SL_2(\Z)$. Selberg proved
the continuation for general groups $\G$ and one of Selberg's methods, as described in \cite[Chap. 6]{IwSp},
was extended in
\cite{JOS08} to prove the continuation of $E^{m,n}_\ca(z,s;f,g)$. We recall from \er{emnx} that this is a constant times
\begin{equation}\label{eee}
  E_\ca(z,s;f_{1}, \dots
  ,f_{m},\overline{g_{1}},\dots,\overline{g_{n}})
\end{equation}
where $f_1=f_2=\cdots =f_m=f$ and $g_1=g_2=\cdots=g_n=g$.  The proof
in \cite{JOS08} goes through almost without change in the general case
of distinct functions $f_i$ and $g_j$. The two main properties
that the proof needs are that \er{eee} is an eigenfunction of the Laplacian
and that it transforms into itself as $z$ is replaced by $\g z$ for
$\g \in \G$ except for the addition of a lower-order term. The proof
also requires some growth estimates that we describe next.

\subsection{Growth estimates for twisted Eisenstein series} \label{jhb}
  Following \cite{IwSp}, we may measure the
growth of $\G$-invariant functions in terms of the  {\it invariant height} function
defined by
$$
y_\G(z):=\max_\ca \max_{\g \in \G}(\Im( \sa^{-1}\g z))
$$
for $z\in \H$ where the outer maximum is taken over our fixed set of inequivalent cusps.
Thus $y_\G(z)$ approaches $\infty$ as  $z$ approaches any cusp.

The twisted Eisenstein series are not
 $\G$-invariant and it is more convenient to fix a
fundamental domain $\F$ and examine their growth there. Let $\mathcal P_Y \subset \H$ denote the strip with $|x| \leqslant 1/2$ and $y \geqslant Y$. We choose $\F$ so that its closure contains the cuspidal zones $\sa \mathcal P_Y$ for all $\ca$ and $Y$ large enough; see \cite[Section 2.2]{IwSp}. For $z \in \F$ we define the  {\it domain height} function
$$
y_\F(z):=\max_\ca (\Im( \sa^{-1} z)).
$$
Clearly, $y_\F(z)$ is bounded below by a positive constant for $z \in \F$ and bounded above by
$
y_\G(z)
$.
In fact $y_\F(z)=y_\G(z)$ when $z$ is in the cuspidal zones $\sa \mathcal P_Y$ for all $\ca$ and $Y$ large enough. Hence $y_\G(z) \ll y_\F(z)$ for $z \in \F$.
It is also shown in \cite[Lemma A.1]{JOS08} that, for any cusp $\cb$ and all $z\in \H$
\begin{equation}\label{ygz}
  y_\G(\sb z) \lqs (c_\G+ 1/c_\G)(y+1/y)
\end{equation}
for $c_\G$ depending only on $\G$.

With Proposition \ref{poi} we have shown that, for any cusp $\cb$,
\begin{equation}\label{ty}
   C_a^{z}(f_1, f_2, \dots,  f_n) \ll 1+|\log \Im(\sbi z) |^n
\end{equation}
for all $z \in \H$. It follows from \er{ty} that
\begin{equation}\label{ty2}
   C_a^{z}(f_1, f_2, \dots,  f_n) \ll \log^n(y_\F(z)+e)
\end{equation}
for all $z \in \F$.
We have, writing $\sigma=\Re(s)$ as usual,
\begin{equation}\label{bd2}
E_\ca(z,s) \ll y_\G(z)^\sigma
\end{equation}
as in \cite[Corollary 3.5]{IwSp} for example. Using \er{ty2} in the proof of \cite[Prop. 3.3]{JOS08} shows
\begin{equation}\label{eee2}
  E_\ca(z,s;f_{1}, \dots
  ,f_{m},\overline{g_{1}},\dots,\overline{g_{n}}) \ll y_\F(z)^{1-\sigma+\varepsilon}
\end{equation}
for all $\varepsilon>0$ and all $z \in \F$ when $m+n>0$.

Our next goal is to extend \er{eee2} to a bound on all of $\H$. Similarly to \cite[Sect. 5.1]{JOS08} we may do this neatly using the series
\begin{equation}
  Q_\cb(z,s;f_{1}, \dots ,f_{m},\overline{g_{1}},\dots,\overline{g_{n}})  := \sum_{\g \in \G_\cb\backslash \G} C^a_{\g z}(f_{1}, \dots ,f_{m}) \cdot  \overline{C^a_{\g z}(g_{1}, \dots ,g_{n})} \cdot  \Im(\sbi \g z)^s. \label{qeis2}
\end{equation}

\begin{prop}
The series $Q_\cb(z,s;f_{1}, \dots ,f_{m},\overline{g_{1}},\dots,\overline{g_{n}})$ converges absolutely for $\Re(s)>1$ to a $\G$-invariant function of $z$. It satisfies the bound
\begin{equation}\label{qmn2}
Q_\cb(z,s;f_{1}, \dots ,f_{m},\overline{g_{1}},\dots,\overline{g_{n}}) \ll \log^{m+n}(y_\G(z)+e) \cdot  y_\G(z)^\sigma
\end{equation}
for all $z\in \H$ and is related to the twisted Eisenstein series through the identities
\begin{align}  \label{qtew}
   \begin{split}
   Q_\cb(z,s;f_1, \dots ,f_m, \ &\overline{g_1}, \dots, \overline{g_n}) \\
   &= \sum_{i=0}^m \sum_{j=0}^n
   C_z^{a}(f_1,  \dots,  f_i) \cdot \overline{C_z^{a}(g_1,  \dots,  g_j)}
    \cdot E_\cb(z,s;f_{i+1}, \dots ,f_m, \overline{g_{j+1}}, \dots, \overline{g_n}),
   \end{split}\\
\begin{split} \label{qtew2}
E_\cb(z,s;f_1, \dots ,f_m, \ &\overline{g_1}, \dots, \overline{g_n}) \\
  &= \sum_{i=0}^m \sum_{j=0}^n
   C_a^{z}(f_1,  \dots,  f_i) \cdot \overline{C_a^{z}(g_1,  \dots,  g_j)}
    \cdot Q_\cb(z,s;f_{i+1}, \dots ,f_m, \overline{g_{j+1}}, \dots, \overline{g_n}).
    \end{split}
\end{align}
\end{prop}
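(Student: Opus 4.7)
The plan is to establish the four claims in order: absolute convergence, $\G$-invariance, the two identities, and finally the refined bound \er{qmn2}.

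For absolute convergence on $\Re(s)>1$, Proposition \ref{poi} (applied after swapping the endpoints, which is permitted by Proposition \ref{prop:ii_properties1} (ii)) gives $|C_{\g z}^a(f_1,\dots,f_m)| \ll 1 + |\log \Im(\sbi \g z)|^m$ uniformly in $\g$, and similarly for the $g$-factor. Multiplying, the summand of \er{qeis2} is dominated by $(1 + |\log \Im(\sbi \g z)|^{m+n}) \Im(\sbi \g z)^\sigma$, and the elementary inequality $|\log y|^k \ll_\varepsilon y^\varepsilon + y^{-\varepsilon}$ reduces the series to a finite combination of classical Eisenstein series $E_\cb(z, \sigma \pm \varepsilon(m+n))$, exactly as in the proof of Theorem \ref{thbn}. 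The $\G$-invariance is then immediate: the substitution $\g \mapsto \g \delta$ is a bijection of $\G_\cb \backslash \G$ that simultaneously replaces $C_{\g \delta z}^a$ and $\Im(\sbi \g \delta z)^s$ by the versions indexed by $\g' = \g \delta$, leaving the absolutely convergent sum unchanged.

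For the two identities, the core step is Proposition \ref{prop:ii_properties1} (iii) combined with the $\G$-invariance \er{msg}. Splitting the path of $C_{\g z}^a$ at the intermediate point $\g a$ and using $C_{\g z}^{\g a} = C_z^a$ yields the decomposition $C_{\g z}^a(f_1,\dots,f_m) = \sum_{i=0}^m C_z^a(f_1,\dots,f_i) \cdot C_{\g a}^a(f_{i+1},\dots,f_m)$. Substituting this and its $g$-analogue into \er{qeis2} and interchanging the finite double sum with the absolutely convergent sum over $\g$ gives \er{qtew}. The dual identity \er{qtew2} follows symmetrically: split $C_{\g a}^a$ at $\g z$, use $C_{\g a}^{\g z} = C_a^z$, and substitute into \er{eis2}.

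The main work is the refined bound \er{qmn2}, since a direct absolute-value estimate of $Q_\cb$ with the $\varepsilon$-trick above yields only the weaker $y_\G(z)^{\sigma+\varepsilon}$. The strategy is to apply \er{qtew} for $z \in \F$: each $C_z^a$ factor is bounded by a power of $\log(y_\F(z)+e)$ via \er{ty2}; the Eisenstein factor in the top-order term $(i,j)=(m,n)$ is the classical $E_\cb(z,s) \ll y_\G(z)^\sigma$ from \er{bd2}; and in every lower-order term it is a genuinely twisted series, bounded by $y_\F(z)^{1-\sigma+\varepsilon}$ from \er{eee2}. Since $1-\sigma+\varepsilon<\sigma$ for $\sigma>1$ and small $\varepsilon$, the $(m,n)$-term dominates the sum and contributes $\log^{m+n}(y_\F(z)+e) \cdot y_\G(z)^\sigma$. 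Using $y_\F(z)\lqs y_\G(z)$ on $\F$ gives the claimed bound on the fundamental domain, and the $\G$-invariance of both $Q_\cb$ and $y_\G$ extends it to all of $\H$. The principal obstacle is exactly this last step: identity \er{qtew} is precisely the mechanism that separates logarithmic growth (from the iterated integrals) from polynomial growth (from the classical Eisenstein series), which is what allows the crude $\varepsilon$-trick bound to be sharpened to the logarithmic refinement.
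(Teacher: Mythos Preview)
Your proof is correct and follows essentially the same approach as the paper: establish the identity \er{qtew} via the concatenation $I_{\g z}^a = I_{\g z}^{\g a} I_{\g a}^a = I_z^a I_{\g a}^a$, deduce \er{qtew2} by the symmetric splitting, and obtain the bound \er{qmn2} by applying \er{ty2}, \er{bd2}, \er{eee2} to the right side of \er{qtew} on $\F$ before invoking $\G$-invariance. The only organizational difference is that you prove absolute convergence directly (repeating the argument of Theorem \ref{thbn}) before deriving \er{qtew}, whereas the paper infers convergence from \er{qtew} itself; both routes are valid.
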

\begin{proof}
We first see that \er{qtew} follows from the identities $I_{\g z}^a=I_{\g z}^{\g a} I_{\g a}^a = I_{ z}^{ a} I_{\g a}^a$. Hence \er{qeis2} converges absolutely for $\Re(s)>1$. It is $\G$-invariant because replacing $z$ by $\delta z$ for $\delta \in \G$ just reorders the series.

Use \er{ty2}, \er{bd2} and \er{eee2} to bound the right side of \er{qtew} and obtain
\begin{equation}\label{qmn2f}
Q_\cb(z,s;f_{1}, \dots ,f_{m},\overline{g_{1}},\dots,\overline{g_{n}}) \ll \log^{m+n}(y_\F(z)+e) \cdot  y_\F(z)^\sigma
\end{equation}
for all $z\in \F$. Then \er{qmn2} is a consequence of \er{qmn2f} since $Q_\cb$ is $\G$-invariant.

The relation \er{qtew2} follows similarly to  \er{qtew}.
\end{proof}

\begin{cor} \label{kol}
For every cusp $\cb$ and all $z\in \H$ we have
\begin{equation}\label{ghjx}
  E_\ca(\sb z,s;f_1, \dots ,f_m, \overline{g_1}, \dots, \overline{g_n}) \ll \log^{m+n}(y+1/y) \cdot (y^\sigma + y^{-\sigma}).
\end{equation}
\end{cor}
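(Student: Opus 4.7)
The plan is to derive \er{ghjx} directly from identity \er{qtew2} combined with the bounds already in hand. Applying \er{qtew2} at the outer cusp $\ca$ and evaluating at $w=\sb z$ gives
\begin{equation*}
E_\ca(\sb z,s;f_1,\dots,f_m,\overline{g_1},\dots,\overline{g_n}) = \sum_{i=0}^m\sum_{j=0}^n C_a^{\sb z}(f_1,\dots,f_i)\, \overline{C_a^{\sb z}(g_1,\dots,g_j)}\, Q_\ca(\sb z,s;f_{i+1},\dots,\overline{g_n}),
\end{equation*}
so the task reduces to estimating each of the three factors in the summand.

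I would then bound each factor separately. Proposition \ref{poi}, applied with the cusp taken to be $\cb$ so that $\Im(\sbi\sb z)=y$, yields $C_a^{\sb z}(f_1,\dots,f_i)\ll 1+|\log y|^i$ and an analogous bound for the conjugate factor. For the $Q$-factor, the $\G$-invariant global bound \er{qmn2} together with the height inequality \er{ygz} gives
\begin{equation*}
Q_\ca(\sb z,s;f_{i+1},\dots,\overline{g_n}) \ll \log^{m+n-i-j}\!\bigl(y_\G(\sb z)+e\bigr)\cdot y_\G(\sb z)^\sigma \ll \log^{m+n-i-j}\!\bigl(y+1/y+e\bigr)\,(y+1/y)^\sigma.
\end{equation*}

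To combine these into the stated bound I would use three elementary observations, all valid because $y+1/y\gqs 2$: the constant $e$ is absorbed, in the sense that $\log(y+1/y+e)\ll\log(y+1/y)$; one has $|\log y|^k\ll\log^k(y+1/y)$ for every $k\gqs 0$, since $|\log y|\lqs \log(y+1/y)$ and $\log(y+1/y)\gqs\log 2>0$; and $(y+1/y)^\sigma\ll y^\sigma+y^{-\sigma}$ with an implied constant depending only on $\sigma$ (split into the cases $\sigma\gqs 0$ and $\sigma<0$, using $y+1/y\lqs 2\max(y,1/y)$ in the first case, and $(y+1/y)^\sigma\lqs 2^\sigma$ together with $y^\sigma+y^{-\sigma}\gqs 2$ in the second). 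Multiplying the three factor bounds gives a summand of size $\ll\log^{m+n}(y+1/y)(y^\sigma+y^{-\sigma})$, and summing over the finitely many pairs $(i,j)$ completes the proof.

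There is no real obstacle here; the argument is essentially bookkeeping between the various growth measures ($y$ near the cusp $\cb$, the invariant height $y_\G$, and the domain height $y_\F$) together with the harmless constant absorptions noted above. The substantive content has already been assembled: Proposition \ref{poi}, the invariant bound \er{qmn2}, the identity \er{qtew2}, and the height inequality \er{ygz}.
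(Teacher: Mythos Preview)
Your proof is correct and follows essentially the same route as the paper: expand $E_\ca(\sb z,s;\dots)$ via \er{qtew2}, bound the $C$-factors with Proposition \ref{poi} (which is \er{ty}) and the $Q$-factor with \er{qmn2} combined with \er{ygz}, then absorb constants using elementary inequalities for $y+1/y$. The paper's version is terser---it records the intermediate bound on $Q_\ca(\sb z,s;\dots)$ and invokes the single inequality $|\log y|+1<3\log(y+1/y)$---but the logic is identical.
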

\begin{proof}
Use \er{ygz} in \er{qmn2} to show that
\begin{equation}\label{azd}
  Q_\ca(\sb z,s;f_{1}, \dots ,f_{m},\overline{g_{1}},\dots,\overline{g_{n}}) \ll \log^{m+n}(y+1/y) \cdot (y^\sigma + y^{-\sigma})
\end{equation}
for all $z\in \H$. Then \er{azd} and \er{ty}
 along with the inequality $|\log y|+1 < 3\log(y+1/y)$  bound the right side of \er{qtew2}.
\end{proof}

The estimates we have developed in this subsection are the same as those found in Sections 3.2 and 5.1 of \cite{JOS08} for the case of equal cusp forms: $f_1=\cdots =f_m$ and  $g_1=\cdots=g_n$.

\subsection{Continuation using the Fredholm theory of integral equations} \label{contf}
We are following Sections 5.2 and 5.3 of \cite{JOS08} in the next discussion. Set
\begin{equation*}
  u(z,w):=\frac{|z-w|^2}{4 \Im (z) \Im (w)}, \qquad G_\alpha(u):=\frac{1}{4\pi}\int_0^1(t(1-t))^{\alpha-1}(t+u)^{-\alpha}\,dt.
\end{equation*}
It is convenient to let $\mathbf{f}=(f_1, \dots ,f_m)$ and $\mathbf{g}=(g_1, \dots ,g_n)$.
The resolvent of the Laplacian may be written as an integral operator with kernel given by the above Green function $G_\alpha(u)$. Since $E_\ca(z,s;\mathbf{f}, \overline{\mathbf{g}})$ is an eigenfunction of $\Delta$ and Corollary \ref{kol} holds, we obtain
\begin{equation}\label{rev}
  \frac{-E_\ca(z,s;\mathbf{f}, \overline{\mathbf{g}})}{(\alpha(1-\alpha)-s(1-s))}=\int_{\H}G_\alpha(u(z,z'))E_\ca(z',s;\mathbf{f}, \overline{\mathbf{g}})\,d\mu (z')
\end{equation}
for $1<$ Re$(s)<\alpha-2$. To remove a logarithmic singularity at $u=0$ we set $G_{\alpha \beta}(u):=G_\alpha(u)-G_\beta(u)$ for $\beta<\alpha$. Also put
\begin{equation*}
  \nu_{\alpha \beta}(s):=(\alpha(1-\alpha)-s(1-s))^{-1}-(\beta(1-\beta)-s(1-s))^{-1}
\end{equation*}
and, on breaking up $\H$ into the images of $\F$ under $\G$, \er{rev} becomes
\begin{equation}\label{rev2}
  -\nu_{\alpha \beta}(s) E_\ca(z,s;\mathbf{f}, \overline{\mathbf{g}})=\int_{\F} \sum_{\g \in \G} G_{\alpha\beta}(u(z,\g z'))E_\ca(\g z',s;\mathbf{f}, \overline{\mathbf{g}})\,d\mu (z')
\end{equation}
for $1<$ Re$(s)<\beta-2<\alpha-2$. Use the transformation property \er{eb2} of $E_\ca(\g z',s;\mathbf{f}, \overline{\mathbf{g}})$ to expand this. With
\begin{equation*}
  G_{\alpha \beta}^{i,j}(z,z'):=\sum_{\g \in \G}  C_a^{\g a}(f_1,  \dots,  f_i)\overline{C_a^{\g a}(g_1,  \dots,  g_j)} G_{\alpha\beta}(u(z,\g z'))
\end{equation*}
and $ G_{\alpha \beta}(z,z'):= G_{\alpha \beta}^{0,0}(z,z')$ we find
\begin{equation}\label{fredx}
   E_\ca(z,s;\mathbf{f}, \overline{\mathbf{g}})=q^{m,n}(z,s)+\lambda \int_{\F}  G_{\alpha\beta}(z, z')E_\ca(\g z',s;\mathbf{f}, \overline{\mathbf{g}})\,d\mu (z')
\end{equation}
for $\lambda=\lambda(s):=-1/\nu_{\alpha \beta}(s)$, a polynomial in $s$ of degree $4$, and
\begin{equation} \label{fredx2}
  q^{m,n}(z,s) := \lambda \sum_{i,j} \int_{\F}
   G_{\alpha \beta}^{i,j}(z,z')
   E_\ca(z',s;f_{i+1}, \dots ,f_m, \overline{g_{j+1}}, \dots, \overline{g_n})\,d\mu (z')
\end{equation}
where the sum in \er{fredx2} is over all $i,$ $j$ that  satisfy $0\lqs i\lqs m$, $0\lqs j\lqs n$ and are not both zero.

The integral equation \er{fredx} is valid for $1<$ Re$(s)<\beta-2$. However, it determines $E_\ca(z,s;\mathbf{f}, \overline{\mathbf{g}})$ uniquely and by the Fredholm theory there exists a kernel $D_\lambda(z,z')$ and a function $D(\lambda) \not\equiv 0$, which are both built from $G_{\alpha \beta}(z,z')$ and analytic in $\lambda$, so that
\begin{equation}\label{drf}
  E_\ca(z,s;\mathbf{f}, \overline{\mathbf{g}}) = q^{m,n}(z,s) +\frac{\lambda}{D(\lambda)} \int_{\F}  D_\lambda(z,z') q^{m,n}(z',s) \,d\mu (z').
\end{equation}
This provides the desired analytic continuation of $E_\ca(z,s;\mathbf{f}, \overline{\mathbf{g}})$ with an induction argument when we know that the lower-order series $q^{m,n}(z,s)$ already has a continuation.

We simplified the  presentation above by omitting a step. The kernel $G_{\alpha\beta}(z, z')$ in \er{fredx} is not bounded, as required by  the Fredholm theory we are using, and must be replaced by a truncated version $\eta(z)\eta(z')H_s(z,z')$ as described in \cite[p. 84]{IwSp} and \cite[Sect. 5.3]{JOS08}.

Theorem \ref{thm:con} follows immediately from the next result.

\begin{theorem} \label{cont2}
Let $\mathbf{f}=(f_1, \dots ,f_m)$ and $\mathbf{g}=(g_1, \dots ,g_n)$ for $m,n \in \Z_{\gqs 0}$.
For every ball $\B_r \subset \C$ of radius $r$ about the origin  there exist functions $A_\ca(s)$, $\phi_{\ca \cb}(s;\mathbf{f}, \overline{\mathbf{g}})$ and $\phi_{\ca \cb}(k,s;\mathbf{f}, \overline{\mathbf{g}})$ for all   $k \in \Z_{\neq 0}$ so that the following assertions hold:
\begin{enumerate}
\item $A_\ca(s)$ is analytic on $\B_r$ and not identically $0$.
\item $\phi_{\ca \cb}(s;\mathbf{f}, \overline{\mathbf{g}})$ and $\phi_{\ca \cb}(k,s;\mathbf{f}, \overline{\mathbf{g}})$ are meromorphic functions of $s$ on $\B_r$.
\item For all $s\in \B_r$
\begin{align*}
    A^{m+n+1}_\ca(s) \cdot \phi_{\ca \cb}(s;\mathbf{f}, \overline{\mathbf{g}}) & \ll  1,\\
    A^{m+n+1}_\ca(s) \cdot \phi_{\ca \cb}(k,s;\mathbf{f}, \overline{\mathbf{g}}) & \ll  (\log^{m+n} |k| +1)(|k|^\sigma + |k|^{1-\sigma}).
\end{align*}
\item The Fourier expansion
\begin{equation} \label{fexpmn}
E_{\ca}(\sb z,s;\mathbf{f}, \overline{\mathbf{g}})=\delta^{m,n}_{0,0} \cdot \delta_{\ca \cb}
y^s+ \phi_{\ca \cb}(s;\mathbf{f}, \overline{\mathbf{g}})y^{1-s}+ \sum_{k\not=0}\phi_{\ca
\cb}(k,s;\mathbf{f}, \overline{\mathbf{g}})W_s(kz)
\end{equation}
 agrees with \er{rae}, \er{rae2} for $\Re(s)>1$ and, for all $z \in \H$, converges to a meromorphic function of  $s \in\B_r$.
 \item For all $s \in\B_r$ and $z\in \F$ we have
\begin{equation} \label{fbnd}
A^{m+n+1}_\ca(s)  \cdot E_{\ca}(z,s;\mathbf{f}, \overline{\mathbf{g}}) \ll  y_\F(z)^{|\sigma-1/2|+1/2}.
\end{equation}
\end{enumerate}
\end{theorem}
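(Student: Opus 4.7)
The plan is to establish Theorem \ref{cont2} by induction on $N=m+n$, implementing the Fredholm-theoretic strategy outlined in the discussion leading up to the theorem, which itself follows \cite[Sects.\ 5.2--5.3]{JOS08} with only cosmetic adjustments needed to handle distinct cusp forms in $\mathbf{f}$ and $\mathbf{g}$. I would choose $A_\ca(s):=D(\lambda(s))$ once and for all, where $D$ is the Fredholm determinant produced by \er{drf}; this function is entire in $s$, since $\lambda(s)$ is a polynomial of degree four and $D$ is analytic in $\lambda$, and is not identically zero by the discussion preceding \er{drf}, settling (i). The base case $N=0$ is classical: all of (ii)--(v) hold for the real-analytic Eisenstein series $E_\ca(z,s)$ with a single factor of $A_\ca$ sufficing, by Selberg's treatment in \cite[Chap.\ 6]{IwSp}.

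For the inductive step, assume the theorem holds for all pairs of total length at most $N-1$. The function $q^{m,n}(z,s)$ in \er{fredx2} is a finite linear combination of integrals of the kernels $G^{i,j}_{\alpha\beta}(z,z')$ against strictly lower-order twisted Eisenstein series. The series defining $G^{i,j}_{\alpha\beta}$ converge absolutely by the same argument that proves Theorem \ref{thbn}: Corollary \ref{hjk} bounds the iterated-integral coefficients by powers of logarithms, and these are absorbed by the decay of $G_{\alpha\beta}(u)$ as $u\to\infty$. Combined with the inductive form of bound (v) for the lower-order Eisenstein series, this shows that $A_\ca(s)^N\cdot q^{m,n}(z,s)$ is meromorphic on $\B_r$ and, for $z\in\F$, bounded by a constant times $y_\F(z)^{|\sigma-1/2|+1/2}$. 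Multiplying \er{drf} by $A_\ca(s)^{N+1}$ and noting that $A_\ca(s)^{N+1}/D(\lambda(s))=A_\ca(s)^N$ produces a meromorphic function of $s\in\B_r$, establishing (ii) and the meromorphic-continuation part of (iv). The pointwise estimate \er{fbnd} then follows by bounding the resolvent term in \er{drf}: after replacing $G_{\alpha\beta}$ by the truncated kernel $\eta(z)\eta(z')H_s(z,z')$ required for Fredholm theory (see \cite[p.\ 84]{IwSp}), the kernel $D_\lambda(z,z')$ is locally uniformly bounded in $s$, and integrating it against the inductive bound on $q^{m,n}$ preserves the exponent $|\sigma-1/2|+1/2$ via the standard Laplacian-resolvent estimates.

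It remains to treat (iii) and the convergence assertion in (iv). For $\Re(s)>1$ the coefficients $\phi_{\ca\cb}(s;\mathbf{f},\overline{\mathbf{g}})$ and $\phi_{\ca\cb}(k,s;\mathbf{f},\overline{\mathbf{g}})$ are defined by \er{rae2}; for general $s\in\B_r$ they are recovered from the continued $E_\ca(\sb z,s;\mathbf{f},\overline{\mathbf{g}})$ by integrating against $\e(-kx)$ over $|x|\lqs 1/2$, and the uniqueness of such expansions together with the meromorphicity of $E_\ca$ in $s$ makes them meromorphic. The growth estimates in (iii) are then obtained by specializing \er{fbnd} at $y$ of order $1/|k|$ when $k\neq 0$ (and of order one when $k=0$): the exponential decay of the Whittaker function $W_s(kz)$ converts a bound of the shape $\log^{m+n}(y+1/y)(y^\sigma+y^{-\sigma})$ coming from Corollary \ref{kol} into one of the form $(\log^{m+n}|k|+1)(|k|^\sigma+|k|^{1-\sigma})$ after multiplication by $A_\ca^{N+1}(s)$.

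The main obstacle is coordinating the estimates so that the same $A_\ca(s)$, raised only to the power $N+1$, suffices at every stage of the induction, and so that \er{fbnd} does not accumulate additional factors of $y_\F(z)$ with each pass through the resolvent. This forces one to use the truncated Green function $\eta(z)\eta(z')H_s(z,z')$ together with sharp Fredholm-kernel estimates in tandem, so that the inductive bound on $q^{m,n}$ survives the integral operator without degradation.
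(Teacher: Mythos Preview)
Your proposal is correct and follows essentially the same approach as the paper: induction on $m+n$ via the Fredholm integral equation \er{drf}, with $A_\ca(s)$ built from the Fredholm determinant $D(\lambda(s))$ and the base case deferred to the classical theory. The paper's own proof is extremely terse---it simply cites \cite[Thm.\ 6.1]{JOS08} for the base case (noting $D(0)=1$ from \cite[p.\ 193]{IwSp} to ensure $A_\ca\not\equiv 0$) and \cite[Thm.\ 6.5]{JOS08} for the inductive step, with the bounds from Section \ref{jhb} replacing those of \cite{JOS08}---and your write-up is a faithful expansion of exactly that argument.
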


\begin{proof}
The case $m+n=0$ is given  in  \cite[Thm. 6.1]{JOS08}. Note that it should have been stated there that $A_\ca(s)$ is  not identically $0$. This follows from its construction from $D(\lambda)$ in \cite[Eq. (6.6)]{JOS08} and the fact that $D(0)=1$ from \cite[p. 193]{IwSp}.

The rest of the proof uses induction on $m+n$. This is the same as the proof of \cite[Thm. 6.5]{JOS08}, but based on the integral equation \er{fredx} and using the bounds from Section \ref{jhb}.
\end{proof}

The techniques of Diamantis-Sim \cite{DS} should also give the meromorphic continuation of $E_{\ca}(z,s;\mathbf{f}, \overline{\mathbf{g}})$ to all $s\in \C$. There, they essentially find the continuation of $Q_{\ca}(z,s;\mathbf{f}, \overline{\mathbf{g}})$, as defined in \er{qeis2}, by means of its spectral expansion. Their proof is for $\mathbf{f}$ empty but should carry over to our
  setting without difficulty; see \cite[Theorem 3.4]{DS}.

The first proof of the continuation of $E^{m,n}_\ca(z,s;f,g)$, corresponding to the case of equal cusp forms $f_1=\cdots =f_m$ and  $g_1=\cdots=g_n$, was given in Petridis-Risager \cite{PeRi}, following earlier work in Petridis \cite{Pe}. This method exploits the fact that
\begin{equation}\label{chi}
  \chi^f_\varepsilon(\g)=\chi_\varepsilon(\g):= \exp\left( \varepsilon \int_{z_0}^{\g z_0} f(u)\, du  \right)
\end{equation}
is a  character with $\chi_\varepsilon(\g_1\g_2)=\chi_\varepsilon(\g_1)\chi_\varepsilon(\g_2)$. Setting
\begin{equation} \label{wjk}
  E_{\cb}(z,s; \chi_\varepsilon):=\sum_{\g
    \in \G_\cb\backslash \G} \chi_\varepsilon(\g) \cdot \Im(\sbi\g z)^s,
\end{equation}
we find that $E_{\cb}(z,s; f)$ can be recovered as $\frac{d}{d\varepsilon} E_{\cb}(z,s; \chi_\varepsilon)|_{\varepsilon=0}$. To obtain $E^{m,n}_\ca(z,s;f,g)$, the  character $\chi_\varepsilon$ is replaced with a product  $\chi_{\varepsilon_1} \cdot \chi_{\varepsilon_2}\cdots$ with different parameters; see \cite[Eq. (1.10)]{PeRi}. The meromorphic continuation of \er{wjk} and this many parameter generalization is achieved in \cite{Pe,PeRi} by employing spectral deformation theory. Taking derivatives then gives the continuation of $E^{m,n}_\ca(z,s;f,g)$.

It is not clear if the methods of \cite{Pe,PeRi} extend to proving the
continuation of the general Eisenstein series
$E_{\ca}(z,s;\mathbf{f}, \overline{\mathbf{g}})$ studied in this
paper. For example, if the integral in \er{chi} is replaced by
$C_{z_0}^{\g z_0}(f_1,f_2)$ then $ \chi_\varepsilon(\g)$ will not be a
character in general. By Example \ref{exx} (iv) and \er{hm3}, it will
satisfy the more complicated relation
\begin{equation*}
  \chi_\varepsilon(\g_1\g_2\g_3)=\frac{\chi_\varepsilon(\g_1\g_2)\chi_\varepsilon(\g_1\g_3)\chi_\varepsilon(\g_2\g_3)}
  {\chi_\varepsilon(\g_1)\chi_\varepsilon(\g_2)\chi_\varepsilon(\g_3)}.
\end{equation*}

\subsection{Functional equations}

By analogy with the classical Eisenstein series, we expect a relationship between the values of $E_{\ca}( z,s;\mathbf{f}, \overline{\mathbf{g}})$ at $s$ and $1-s$. In some cases we do have such a functional equation and to express it we  set up the following notation.

Suppose $\G$ has $h$ inequivalent cusps. Let $E(z,s;\mathbf{f}, \overline{\mathbf{g}})$ be the $h\times 1$ column vector with entries $E_\ca(z,s;\mathbf{f}, \overline{\mathbf{g}})$ as $\ca$ lists the inequivalent cusps. With the same ordering, let $\phi(s;\mathbf{f}, \overline{\mathbf{g}})$ be the $h\times h$ matrix with entries $\phi_{\ca\cb}(s;\mathbf{f}, \overline{\mathbf{g}})$.

The formal series version of $E$ above is  $\mathcal{E}(z,s)$, the $h\times 1$ column vector with entries $\mathcal{E}_\ca(z,s)$. It satisfies the vector version of \er{wo2}:
\begin{equation*}
  \mathcal{E}(z,s) = \sum_{c,d \gqs 0} \sum_{i_1, \dots, i_c,j_1,\dots, j_d} E(z,s;f_{i_1}, \dots, f_{i_c}, \overline{g_{j_1}}, \dots, \overline{g_{j_d}})
  \cdot X_{i_1} \cdots  X_{i_c} \overline{Y_{j_1}} \cdots \overline{Y_{j_d}} .
\end{equation*}
 The formal series version of $\phi$ is $\Phi(s)$, the $h\times h$ matrix with entries $\Phi_{\ca\cb}(s)$. It satisfies the matrix version of \er{phab}:
\begin{equation*}
  \Phi(s) = \sum_{c,d \gqs 0} \sum_{i_1, \dots, i_c,j_1,\dots, j_d} \phi(s;f_{i_1}, \dots, f_{i_c}, \overline{g_{j_1}}, \dots, \overline{g_{j_d}})
  \cdot X_{i_1} \cdots  X_{i_c} \overline{Y_{j_1}} \cdots \overline{Y_{j_d}} .
\end{equation*}

We may give a simple reformulation of  \cite[Thm. 7.1]{JOS08} as follows.

\begin{theorem}\label{thm:fe}
Define $\mathcal{E}( z,s)$ with modular symbols $I_{\g a}^a(f)$ and $J_{\g a}^a(g)$ depending on single cusp forms. It satisfies the functional equation
\begin{equation}\label{fe1}
  \Phi(1-s)  \mathcal{E}( z,s) = \mathcal{E}( z,1-s)
\end{equation}
with
\begin{equation}\label{fe11}
  \Phi(1-s) \Phi(s)=I
\end{equation}
for $I$  the $h\times h$ identity matrix.
\end{theorem}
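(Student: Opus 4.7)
The plan is to derive \er{fe1} by bundling the scalar functional equations established for the higher-order Eisenstein series $E^{m,n}_\ca(z,s;f,g)$ in \cite[Thm.~7.1]{JOS08} into the matrix/generating-series formalism of the statement. Since only one cusp form $f$ (resp.\ $g$) contributes to the variable $X$ (resp.\ $\overline{Y}$), the power series ring $\C\llangle X,\overline{Y}\rrangle$ is in fact commutative, and by Proposition \ref{prop:ii_properties1}(i) the symbols $I_a^b(f)$ and $J_a^b(g)$ are honest exponentials. Thus $\mathcal{E}_\cb(z,s)$ is a genuine exponential generating function: by \er{emnx} the coefficient of $X^m \overline{Y}^n$ in $\mathcal{E}_\cb(z,s)$ equals $\tfrac{(-1)^m}{(2\pi i)^{m+n}m!\,n!} E^{m,n}_\cb(z,s;f,g)$, and reading off the $y^{1-s}$ terms in the Fourier expansions \er{rae3} shows that the corresponding coefficient of $\Phi_{\ca\cb}(s)$ is the same normalising constant times the scattering function $\phi^{m,n}_{\ca\cb}(s;f,g)$ attached to $E^{m,n}_\ca$.

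Next, I would invoke the JOS08 functional equation, which writes $E^{m,n}_\ca(z,1-s;f,g)$ as a sum over cusps $\cb$ and over $0\lqs i\lqs m$, $0\lqs j\lqs n$ of binomial combinations of products $\phi^{i,j}_{\ca\cb}(1-s;f,g)\cdot E^{m-i,n-j}_\cb(z,s;f,g)$. Multiplying that identity through by $X^m\overline{Y}^n/(m!\,n!)$ with the appropriate $(-1)^m/(2\pi i)^{m+n}$ normalisation converts the inner binomial double sum into a Cauchy convolution in the two indices; summing over all $m,n\gqs 0$ and using the dictionary above, the right-hand side reassembles into $\sum_\cb \Phi_{\ca\cb}(1-s)\cdot \mathcal{E}_\cb(z,s)$, which is precisely the $\ca$-th entry of \er{fe1}.

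For \er{fe11} I would apply \er{fe1} twice to obtain $(\Phi(s)\Phi(1-s) - I)\,\mathcal{E}(z,s)=0$, an identity of column vectors of formal power series with meromorphic $s$-dependent coefficients. Replacing $z$ by $\sb z$ and comparing the Fourier expansions \er{rae}, \er{rae2}, \er{rae3}, the $y^s$-coefficient of $\mathcal{E}_\ca(\sb z,s)$ is the scalar $\delta_{\ca\cb}$, because by \er{rae2} the twisted contributions with $(m,n)\neq (0,0)$ carry no $y^s$-term. Extracting the $y^s$-coefficient of the vanishing linear combination then forces the $\cb$-th column of $\Phi(s)\Phi(1-s) - I$ to vanish for every $\cb$, establishing \er{fe11}. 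The main obstacle I expect is not analytic but combinatorial bookkeeping: one must verify that the explicit binomial weights in \cite[Thm.~7.1]{JOS08} mesh exactly with the $\tfrac{(-1)^m}{(2\pi i)^{m+n}m!\,n!}$ normalisation so that each double binomial sum is precisely a Cauchy product of formal series. This should be essentially automatic, since $\mathcal{E}_\cb$ was built as the natural exponential generating function in the first place, but it still demands careful accounting of signs and factorials.
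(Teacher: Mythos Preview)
Your proposal is correct and matches the paper's approach: the paper presents Theorem~\ref{thm:fe} explicitly as a ``simple reformulation of \cite[Thm.~7.1]{JOS08}'', observing that expanding \er{fe1} and \er{fe11} coefficient-by-coefficient yields exactly the matrix identities \er{tex} and \er{tex2}, which are the content of that reference. Your treatment of \er{fe1} is precisely this dictionary spelled out in detail.

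The one mild difference is in \er{fe11}. The paper simply notes that \er{fe11} is equivalent to \er{tex2} and appeals to \cite{JOS08} again, whereas you deduce \er{fe11} directly from \er{fe1}: iterate \er{fe1} to get $(\Phi(s)\Phi(1-s)-I)\,\mathcal{E}(z,s)=0$, then isolate the $y^s$-term in the Fourier expansion \er{rae3} at each cusp to kill every column of $\Phi(s)\Phi(1-s)-I$ (and swap $s\leftrightarrow 1-s$ to obtain the stated order). This is a clean self-contained alternative that avoids invoking the second half of \cite[Thm.~7.1]{JOS08}; the paper's route is shorter only because it outsources the work. Your anticipated bookkeeping with the factor $(-1)^m/((2\pi i)^{m+n}m!\,n!)$ is indeed automatic, since this constant multiplies both sides of every scalar identity uniformly and cancels in the Cauchy convolution, exactly as the paper's passage from \er{tex}, \er{tex2} to \er{fe1}, \er{fe11} implicitly uses.
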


The formal series equations \er{fe1} and \er{fe11} are equivalent to showing the matrix equations
\begin{align}\label{tex}
 \sum_{i=0}^m \sum_{j=0}^n \phi(1-s;\overbrace{f, \dots,f}^i,\overbrace{\overline{g},\dots,\overline{g}}^j) E(z,s; \overbrace{f, \dots,f}^{m-i},\overbrace{\overline{g},\dots,\overline{g}}^{n-j})  & = E(z,1-s;\overbrace{f, \dots,f}^m,\overbrace{\overline{g},\dots,\overline{g}}^n), \\
  \sum_{i=0}^m \sum_{j=0}^n \phi(1-s;\overbrace{f, \dots,f}^i,\overbrace{\overline{g},\dots,\overline{g}}^j) \phi(s; \overbrace{f, \dots,f}^{m-i},\overbrace{\overline{g},\dots,\overline{g}}^{n-j}) & =
  \left\{
    \begin{array}{ll}
      I, & \hbox{if $m=n=0$;} \\
      0, & \hbox{otherwise}
    \end{array}
  \right. \label{tex2}
\end{align}
for all $m,$ $n \in \Z_{\gqs 0}$. The $m=n=0$ cases of \er{tex} and \er{tex2} are the classical functional equations
\begin{equation*}
  \phi(1-s) E(z,s)   = E(z,1-s), \qquad \phi(1-s)\phi(s)   = I
\end{equation*}
as shown in \cite[Sect. 6.3]{IwSp}.

In fact the functional equation of Theorem \ref{thm:fe} appears
already in Petridis \cite{Pe}. We may write the functional equation
\cite[Eq. 1.9]{Pe} of the series \er{wjk} as the matrix equation
\begin{equation} \label{wjk2}
  \phi(1-s;\chi^f_\varepsilon \cdot \chi^{\overline{g}}_{\varepsilon'}) E(z,s;\chi^f_\varepsilon \cdot \chi^{\overline{g}}_{\varepsilon'})   = E(z,1-s;\chi^f_\varepsilon \cdot \chi^{\overline{g}}_{\varepsilon'}).
\end{equation}
Now we see that \er{wjk2}, treated as a relation of formal series in
$\varepsilon$ and $\varepsilon'$, agrees with \er{fe1}.

See also \cite[Thm. 43]{Ri} where a functional equation equivalent to \er{fe1} is proved, but with scattering matrix $\Phi(1-s)$ defined differently.

It is  natural to ask whether $ \mathcal{E}( z,s)$ satisfies the
functional equation \er{fe1}, or some other one, when it contains modular symbols $I_{\g a}^a(\mathbf{f})$ and $J_{\g a}^a(\mathbf{g})$ depending on more than one cusp form.

\section{Higher-order automorphic forms and maps}

We see next how the series $E_\cb(z,s;\mathbf{f}, \overline{\mathbf{g}})$ and the iterated integrals $C_a^{z}(\mathbf{f})$ and $C_a^{\g a}(\mathbf{f})$ fit into a  larger framework.

\subsection{Higher-order forms}

Following the description in \cite[Sect. 3]{JOS08}, we may define a sequence  $\mathcal A^n(\G)$ of sets of smooth functions from $\H \to \C$ recursively as follows. Let $\mathcal A^0(\G):=\{\H \to 0\}$ and  for  $n \in \Z_{\geqslant 1}$ set
$$
\mathcal A^n(\G):=\Bigl\{\psi \, \Big| \, \psi(\g z) -\psi(z) \in \mathcal A^{n-1}(\G) \text{ \ for all \ } \g \in \G \Bigr\}.
$$
Elements of $\mathcal A^n(\G)$ are called {\em $n$th-order automorphic forms}, naturally forming a vector space over $\C$. The classical $\G$-invariant functions, such as $E_\ca(z,s)$, are in
$\mathcal A^1(\G)$ and so are first-order.
If we let $\g \in \G$ act on $\psi$ by $(\psi|\g)(z):=\psi(\g z)$, and extend this action to all $\C[\G]$ by linearity, then we see that $\psi \in \mathcal A^{n}(\G)$ if and only if
\begin{equation*}
    \psi \big|(\g_1 -I)(\g_2-I) \cdots (\g_n-I) = 0  \quad \text{for all} \quad \g_1,\g_2, \dots,\g_n \in \G.
\end{equation*}
Inductive arguments show that
\begin{equation} \label{pmna}
  \mathcal A^m(\G) \subseteq \mathcal A^n(\G) \quad \text{for all} \quad 0 \leqslant m \leqslant n
\end{equation}
and if $\phi(z) \in \mathcal A^m(\G)$ and  $\psi(z) \in \mathcal A^n(\G)$ then
\begin{equation*}
  \phi(z)\cdot \psi(z) \in \mathcal A^{m+n-1}(\G)
\end{equation*}
for $m+n \geqslant 1$.
With \eqref{eb2}, $E_\ca(\g z,s;f) - E_\ca(z,s;f) \in \mathcal A^1(\G)$ implying that $E_\ca(z,s;f) \in \mathcal A^2(\G)$. So $E_\ca(z,s;f)$ is a second-order form satisfying \er{eq:1}.

\begin{prop} \label{ccee}
For all $m,$ $n \in \Z_{\geqslant 0}$ we have
\begin{enumerate}
  \item $C_a^z(f_1, \dots ,f_m) \in \mathcal A^{m+1}(\G)$,
  \item $E_\ca(z,s;f_1, \dots ,f_m, \overline{g_1}, \dots, \overline{g_n}) \in \mathcal A^{m+n+1}(\G)$.
\end{enumerate}
\end{prop}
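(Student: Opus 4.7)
The plan is to prove both parts by induction on the total number of cusp-form arguments, with each inductive step producing a difference formula whose right-hand side is a constant-in-$z$ linear combination of strictly lower-order functions.

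For part (i), the base case $m=0$ is immediate: $C_a^z()=1$ is constant, hence lies in $\mathcal{A}^1(\G)$. For the inductive step, I would first derive
\begin{equation*}
  I_a^{\g z} \;=\; I_a^{\g a} \cdot I_a^z,
\end{equation*}
which follows from concatenation \er{msc} at the middle point $\g a$ combined with the $\G$-invariance $I_{\g a}^{\g z} = I_a^z$ of \er{msg}. Reading off the coefficient of $X_1 \cdots X_m$ yields
\begin{equation*}
  C_a^{\g z}(f_1,\dots,f_m) - C_a^z(f_1,\dots,f_m) \;=\; \sum_{k=1}^m C_a^{\g a}(f_1,\dots,f_k) \cdot C_a^z(f_{k+1},\dots,f_m).
\end{equation*}
Each factor $C_a^{\g a}(f_1,\dots,f_k)$ is independent of $z$, while $C_a^z(f_{k+1},\dots,f_m)$ has only $m-k<m$ arguments and so by induction lies in $\mathcal{A}^{m-k+1}(\G) \subseteq \mathcal{A}^m(\G)$ via \er{pmna}. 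Since $\mathcal{A}^m(\G)$ is closed under $\C$-linear combinations, the displayed difference lies in $\mathcal{A}^m(\G)$, which is precisely what is needed to conclude $C_a^z(f_1,\dots,f_m) \in \mathcal{A}^{m+1}(\G)$.

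Part (ii) is parallel, with the formal-series transformation law \er{eb} (equivalently its component form \er{eb2}) playing the role that $I_a^{\g z} = I_a^{\g a}\cdot I_a^z$ played above. Inducting on $m+n$, the base case $m=n=0$ is the classical $E_\ca(z,s) \in \mathcal{A}^1(\G)$. For the inductive step, isolating the $(i,j)=(0,0)$ term in \er{eb2} expresses the difference $E_\ca(\delta z,s;\mathbf{f},\overline{\mathbf{g}}) - E_\ca(z,s;\mathbf{f},\overline{\mathbf{g}})$ as a sum over pairs $(i,j) \neq (0,0)$ of the constants-in-$z$ $C_a^{\delta a}(f_1,\dots,f_i)\overline{C_a^{\delta a}(g_1,\dots,g_j)}$ multiplied by Eisenstein series $E_\ca(z,s;f_{i+1},\dots,f_m,\overline{g_{j+1}},\dots,\overline{g_n})$ of total argument count $(m-i)+(n-j)<m+n$. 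Each such term lies in $\mathcal{A}^{m+n}(\G)$ by the induction hypothesis (using \er{pmna} to absorb the lower orders), so the difference does too, and $E_\ca(z,s;\mathbf{f},\overline{\mathbf{g}})\in\mathcal{A}^{m+n+1}(\G)$ follows.

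There is no serious obstacle; both arguments are essentially bookkeeping once one has the right difference formula. The only conceptual point is recognising that the multiplicative structure of the noncommutative symbols $I_a^b$ and $J_a^b$ (captured in \er{bpac} and \er{eb}) is precisely what splits off a constant-in-$z$ factor times a lower-order automorphic function, enabling the induction to close.
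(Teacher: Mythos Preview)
Your proposal is correct and matches the paper's proof essentially line for line: the same induction, the same identity $I_a^{\g z}=I_a^{\g a}\cdot I_a^z$ yielding the same difference formula for part (i), and the same appeal to \er{eb2} for part (ii). There is nothing to add.
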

\begin{proof}
Part (i) is proved by induction on $m$. The $m=0$ case is true since $C_a^{z}()=1$, so assume $m\gqs 1$. With
\begin{equation*}
  I_a^{\g  z} = I_a^{\g  a} I_{\g a}^{\g z}=I_a^{\g  a} I_{a}^{z}
\end{equation*}
we find
\begin{equation} \label{trk}
  C_a^{\g z}(f_1, f_2, \dots,  f_m) - C_a^{z}(f_1, f_2, \dots,  f_m)
  = \sum_{j=1}^{m} C_a^{\g a}(f_1,  \dots,  f_j) C_a^{z}(f_{j+1},  \dots,  f_m).
\end{equation}
By induction $C_a^{z}(f_{j+1},  \dots,  f_m) \in \mathcal A^{m-j+1}(\G)$. With \er{pmna}, the right side of \er{trk} is in $\mathcal A^{m}(\G)$ and hence $C_a^z(f_1, \dots ,f_m) \in \mathcal A^{m+1}(\G)$, completing the induction.

Part (ii) has a similar proof using \eqref{eb2}.
\end{proof}

Note that results equivalent to Proposition \ref{ccee} are proved in \cite[Sect. 3.2]{DS}.

\subsection{Higher-order maps}
As in \cite[Sect. 10]{IO}, one can define a related sequence $\operatorname{Hom}^{[n]}(\G,\C)$ of sets of
functions from $\G$ to $\C$ as follows. Let $\operatorname{Hom}^{[0]}(\G,\C):=\{\G \to 0\}$. With the notation
$L_\delta(\g):=L(\g \delta)-L(\g)$ and $n \in \Z_{\geqslant 1}$  define
$$
\operatorname{Hom}^{[n]}(\G,\C) :=\Bigl\{L:\G \to \C \, \Big| \, L_\delta \in \operatorname{Hom}^{[n-1]}(\G,\C)  \text{ \ for all \ }  \delta \in \G \Bigr\}.
$$
For $L:\G\to \C$ and $\g \in \G$,
set $L|\g:=L(\g)$ and extend this linearly to all $\C[\G]$. Then $L \in \operatorname{Hom}^{[n]}(\G,\C)$ if and only if
$$
 L\big|(\g_1 -I)(\g_2-I) \cdots (\g_n-I) = 0  \quad \text{for all} \quad  \g_1,\g_2, \dots,\g_n \in \G .
$$
We see that $\operatorname{Hom}^{[1]}(\G,\C)$ is the space of constant functions. Elements $L$ of $\operatorname{Hom}^{[2]}(\G,\C)$ satisfy
\begin{equation*}
    L(\g_1\g_2)-L(\g_1)-L(\g_2)+L(I) = 0 \quad \text{for all} \quad \g_1,\g_2 \in \G,
\end{equation*}
making $\g \mapsto L(\g)-L(I)$ a homomorphism into the additive part of $\C$.
Similarly, elements $L$ of $\operatorname{Hom}^{[3]}(\G,\C)$ satisfy
\begin{equation} \label{hm3}
    L(\g_1\g_2\g_3)-L(\g_1\g_2)-L(\g_1\g_3)-L(\g_2\g_3)+L(\g_1)+L(\g_2)+L(\g_3)-L(I) = 0
\end{equation}
for all $\g_1,\g_2 ,\g_3 \in \G$. We may call elements of $\operatorname{Hom}^{[n]}(\G,\C)$  {\em $n$th-order maps} from $\G$ to $\C$ and they form a complex vector space.
Inductive arguments demonstrate that
\begin{equation*}
  \operatorname{Hom}^{[m]}(\G,\C) \subseteq \operatorname{Hom}^{[n]}(\G,\C) \quad  \text{for all} \quad 0 \leqslant m \leqslant n
\end{equation*}
and if $L \in \operatorname{Hom}^{[m]}(\G,\C)$ and  $L' \in \operatorname{Hom}^{[n]}(\G,\C)$ then
\begin{equation} \label{prod}
  L \cdot L' \in \operatorname{Hom}^{[m+n-1]}(\G,\C)
\end{equation}
for $m+n \geqslant 1$. If $\g$ is an elliptic element of $\G$, with $\g^N=I$ for some $N>0$, then a similar proof to \cite[Lemma 17]{IO} shows $L(\g)=L(I)$ for all $L \in \operatorname{Hom}^{[n]}(\G,\C)$ with $n \gqs 0$.

\begin{example}\label{exx}{\rm
We have the following examples of higher-order maps:
\begin{enumerate}
  \item The  modular symbol map $\g \mapsto \s{\g}{f}$ is in $\operatorname{Hom}^{[2]}(\G,\C)$. In fact, if we define
\begin{equation*}
  \operatorname{Hom}_0^{[n]}(\G,\C) :=\Bigl\{L \in \operatorname{Hom}^{[n]}(\G,\C) \, \Big| \, L(\g)=0 \ \text{ \ for all parabolic \ $\g \in \G$}  \Bigr\}
\end{equation*}
then $\s{\g}{f}$ is in $\operatorname{Hom}_0^{[2]}(\G,\C)$. Note that $L\in \operatorname{Hom}_0^{[n]}(\G,\C)$ implies $L(I)=0$.
  \item It now follows from \eqref{prod} that, for example,
\begin{equation} \label{sprod}
  \s{\g}{f}^m \overline{ \s{\g}{f}}^n \in \operatorname{Hom}_0^{[m+n+1]}(\G,\C).
\end{equation}
\item If $\psi \in \mathcal A^{n}(\G)$ then, for fixed $z_0 \in \H$, $L(\g)$ defined as $\psi(\g z_0)$ is in $\operatorname{Hom}^{[n]}(\G,\C)$.
  \item As  functions from $\G$ to $\C$, the iterated integral $C_a^{\g b}(f_1, f_2, \dots,  f_n)$ and its complex conjugate are in $\operatorname{Hom}^{[n+1]}(\G,\C)$. This follows from the previous example and Proposition \ref{ccee} (i). With the identity \er{iabc}, we may allow $a$ and $b$ to be in $\H \cup \{\mathtt{cusps}\}$. By Corollary \ref{cr} these functions are in the subspace $\operatorname{Hom}_0^{[n+1]}(\G,\C)$ for $n\gqs 1$.
  \item An interesting  third-order map $\theta_\ca$ is obtained in the paper \cite{JOSS}. The Kronecker limit formula gives the first two terms in the Laurent expansion of $E_\ca(z,s)$ at $s=1$. From the second term we may derive the {\em modular Dedekind symbol} $S_\ca$ which is a map from $\G$ to $\R$.
      Doing the same with the Eisenstein series $E_\ca^{m,m}(z,s;f,f)$ from \er{emnw}
      for $m\geqslant 1$ produces the {\em higher-order  modular Dedekind symbol} $S_\ca^*$ which is independent of $m$ and is also  a map from $\G$ to $\R$. Then, as shown in \cite[Sect. 5.5]{JOSS}
      \begin{equation*}
        \theta_\ca:=S_\ca^*-S_\ca \in \operatorname{Hom}^{[3]}(\G,\C).
      \end{equation*}
      However $\theta_\ca$ is not zero on all parabolic elements
      and so cannot be expressed in terms of products such as \er{sprod} or the iterated integral $C_a^{\g b}(f_1, f_2)$. It would be interesting to understand how
these higher-order modular Dedekind symbols fit into the context of the
 noncommutative Dedekind symbols Manin introduces in \cite{M3}.
\end{enumerate}}
\end{example}

\subsection{The subspace $\operatorname{H}^{[n]}(\G,\C)$}
For fixed $a \in \H \cup \{\mathtt{cusps}\}$, we see by Example \ref{exx} (iv) and \er{prod} that
\begin{equation} \label{frm}
  C_a^{\g a}(f_1, \dots ,f_j) \overline{ C_a^{\g a}(f_{j+1}, \dots ,f_m)}
\end{equation}
is in  $\operatorname{Hom}_0^{[m+1]}(\G,\C)$ for $m\gqs 1$.
For $n\gqs 1$, let $\operatorname{H}^{[n]}(\G,\C)$ be the subspace of
$\operatorname{Hom}_0^{[n]}(\G,\C)$ spanned by maps of the form
\er{frm} for $0\lqs m\lqs n-1$. It follows from the  identity
\er{iabc} for changing the base point that $\operatorname{H}^{[n]}(\G,\C)$ is independent of $a$.

The next result shows that including more iterated integrals in the product and changing endpoints does not take you out of the space $\operatorname{H}^{[n]}(\G,\C)$, provided that the number of cusp forms used is $\lqs n-1$.

\begin{prop}
We have
\begin{equation} \label{jhp}
  \prod_{i=1}^u C_{a_i}^{\g b_i}(f_{i_1}, \dots ,f_{i_{r(i)}}) \prod_{j=1}^v \overline{C_{a'_j}^{\g b'_j}(f_{j_1}, \dots ,f_{j_{t(j)}})} \in \operatorname{H}^{[n]}(\G,\C)
\end{equation}
for $a_i,$ $b_i$, $a'_j,$ $b'_j$ in $\H \cup \{\mathtt{cusps}\}$ and $r(1)+\cdots+r(u)+t(1)+\cdots+t(v) \lqs n-1$.
\end{prop}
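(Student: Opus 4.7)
The strategy is to reduce the left side of \eqref{jhp} to a linear combination of the spanning elements $C_a^{\g a}(f_1,\dots,f_j)\overline{C_a^{\g a}(f_{j+1},\dots,f_m)}$ of $\operatorname{H}^{[n]}(\G,\C)$, using three ingredients: (a) the base-point/concatenation identity \eqref{iabc}, (b) the $\G$-invariance \eqref{msg}, and (c) the shuffle relation \eqref{shuffle} for iterated integrals.

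First, using \eqref{iabc} with intermediate base point $a$ and then $\G$-invariance,
\begin{equation*}
I_{a_i}^{\g b_i}
= I_{a_i}^{a}\cdot I_{a}^{\g a}\cdot I_{\g a}^{\g b_i}
= I_{a_i}^{a}\cdot I_{a}^{\g a}\cdot I_{a}^{b_i}.
\end{equation*}
Reading off the degree-$r(i)$ coefficient yields an expansion
\begin{equation*}
C_{a_i}^{\g b_i}(f_{i_1},\dots,f_{i_{r(i)}})
= \sum_{0\leqslant p\leqslant q\leqslant r(i)}
C_{a_i}^{a}(f_{i_1},\dots,f_{i_p})\,
C_{a}^{\g a}(f_{i_{p+1}},\dots,f_{i_q})\,
C_{a}^{b_i}(f_{i_{q+1}},\dots,f_{i_{r(i)}}),
\end{equation*}
and an analogous expansion (with complex conjugates) holds for each conjugated factor. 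The outer factors $C_{a_i}^{a}(\dots)$ and $C_{a}^{b_i}(\dots)$ are constants independent of $\g$, so the entire $\g$-dependence sits inside the middle factors $C_a^{\g a}(\dots)$ and $\overline{C_a^{\g a}(\dots)}$, each of which has length at most $r(i)$ (respectively $t(j)$).

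Substituting these expansions into the product in \eqref{jhp} and collecting constants, we reduce the claim to showing that every product
\begin{equation*}
\prod_{i=1}^{u'} C_{a}^{\g a}(\mathbf{h}_i)\cdot
\prod_{j=1}^{v'} \overline{C_{a}^{\g a}(\mathbf{k}_j)}
\end{equation*}
lies in $\operatorname{H}^{[n]}(\G,\C)$, where the total length $\sum_i |\mathbf{h}_i| + \sum_j |\mathbf{k}_j|$ is at most $n-1$. At this point I invoke the shuffle product identity \eqref{shuffle}: a product of two iterated integrals with common endpoints is a sum of iterated integrals, and the total length is preserved. Applying shuffles iteratively collapses $\prod_i C_a^{\g a}(\mathbf{h}_i)$ into a single sum of terms $C_a^{\g a}(\mathbf{h})$ of length $|\mathbf{h}|=\sum_i|\mathbf{h}_i|$, and similarly $\prod_j \overline{C_a^{\g a}(\mathbf{k}_j)}$ becomes a sum of $\overline{C_a^{\g a}(\mathbf{k})}$ of length $|\mathbf{k}|=\sum_j|\mathbf{k}_j|$. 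Each resulting term $C_a^{\g a}(\mathbf{h})\overline{C_a^{\g a}(\mathbf{k})}$ is precisely a spanning element of the form \eqref{frm} with $m=|\mathbf{h}|+|\mathbf{k}|\leqslant n-1$, so it lies in $\operatorname{H}^{[n]}(\G,\C)$.

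The main obstacle is purely bookkeeping: making sure that the total length bound $r(1)+\cdots+r(u)+t(1)+\cdots+t(v)\leqslant n-1$ is preserved under all three operations (base-point expansion, shuffling the unconjugated factors, and shuffling the conjugated factors). Each individual middle-factor length is bounded above by the corresponding $r(i)$ or $t(j)$, and the shuffle product is length-additive, so the bound passes through cleanly. No convergence issue arises because everything is algebraic in the iterated integrals, and independence of the fixed base point $a$ is guaranteed by the remark made just before the proposition.
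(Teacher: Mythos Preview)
Your proof is correct and follows essentially the same approach as the paper: reduce all endpoints to a fixed base point $a$ via the concatenation/$\G$-invariance identity \eqref{iabc}, then collapse the resulting products of $C_a^{\g a}$ factors (and separately their conjugates) into single iterated integrals using the shuffle relation \eqref{shuffle}. Your write-up is in fact more explicit than the paper's about the length bookkeeping and about why the outer factors $C_{a_i}^a(\dots)$, $C_a^{b_i}(\dots)$ drop out as constants.
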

\begin{proof}
With  \er{iabc} we may rewrite \er{jhp} as a linear combination of terms of the same form as \er{jhp} but with every $a_i,$ $b_i$, $a'_j,$ $b'_j$ replaced by a fixed $a$.

We next introduce the {\em shuffle permutations}; see for example \cite[Lemma 1.1, (iv)]{DH}.
Let $sh(j,k)$ be the set of all permutations $\rho$ of $\{1,2,\dots,j+k\}$ such that
\begin{equation*}
  \rho(1)< \cdots <\rho(j) \quad \text{and} \quad  \rho(j+1)< \cdots <\rho(j+k).
\end{equation*}
Then  the {\em shuffle relation} implies
\begin{equation} \label{shuffle}
  C_a^{\g a}(f_1, \dots ,f_j)  C_a^{\g a}(f_{j+1}, \dots ,f_k) = \sum_{\rho \in sh(j,k)}  C_a^{\g a}(f_{\rho(1)}, f_{\rho(2)}, \dots ,f_{\rho(j+k)}).
\end{equation}
Therefore products of  two iterated integrals  may be expressed as a sum of single iterated integrals. Applying this repeatedly then gives \er{jhp} as a linear combination of terms of the form \er{frm} as desired.
\end{proof}

For the inclusion
\begin{equation}\label{true}
  \operatorname{H}^{[n]}(\G,\C) \subseteq \operatorname{Hom}_0^{[n]}(\G,\C),
\end{equation}
we clearly have equality when $n=1$ and also when $n=2$ by \cite[Prop. 2.1]{GOS}, for example.
Do we have equality in \er{true} for higher values of $n$? If not, how are the extra $n$th-order maps on the right  described?

{\small
\bibliography{sources}


\vskip 5mm

\textsc{Department of Mathematics,
 The City College of New York,
 New York, NY 10031, USA
 }

\textit{E-mail address:} \texttt{gchinta@ccny.cuny.edu}

\vskip 3mm

\textsc{Department of Mathematics, City University of New York, Bronx
  Community College,}

\textsc{New York, NY 10453, USA}

\textit{E-mail address:} \texttt{ivan.horozov@bcc.cuny.edu}

\vskip 3mm

\textsc{Department of Mathematics, The CUNY Graduate Center, New
   York, NY 10016, USA}

\textit{E-mail address:} \texttt{cosullivan@gc.cuny.edu}

}

\end{document}